\numberwithin{equation}{section}
\newcommand{\BB}{\mathds{1}}
\newcommand{\R}{\mathbb{R}}
\newcommand{\C}{\mathbb{C}}
\newcommand{\T}{\mathbb{T}}
\newcommand{\N}{\mathbb{N}}
\renewcommand{\P}{\mathbb{P}}
\newcommand{\E}{\mathbb{E}}
\newcommand{\cW}{\mathcal{W}}
\newcommand{\cP}{\mathcal{P}}
\newcommand{\cA}{\mathcal{A}}
\newcommand{\eps}{\varepsilon}
\newcommand{\Z}{\mathbb{Z}}
\renewcommand{\emptyset}{\varnothing}
\renewcommand{\epsilon}{\varepsilon}
\renewcommand{\rho}{\varrho}
\renewcommand{\phi}{\varphi}
\newcommand{\Sh}{\mathbb{S}}
\renewcommand{\hat}{\widehat}
\renewcommand{\iint}{\int\hspace{-0.1in}\int}
\newcommand{\id}{\operatorname{id}}
\newcommand{\m}{m} 
\DeclareMathOperator{\diam}{diam}
\DeclareMathOperator{\supp}{supp}
\theoremstyle{plain}
\newtheorem{thm}{Theorem}[section]
\newtheorem{theorem}{Theorem}[section]
\newtheorem{lemma}[thm]{Lemma}
\newtheorem{prop}[thm]{Proposition}
\newtheorem{cor}[thm]{Corollary}
\theoremstyle{definition}
\newtheorem{definition}[thm]{Definition}
\newtheorem{remark}[thm]{Remark}
\subjclass[2010]{42A20 (Primary), 42A38, 37C45, 28A80, 60K05 (Secondary)}
\keywords{Fourier analysis, self-affine sets, Trigonometric series, Fourier series, random walk on groups, renewal theory, stationary measure}
\thanks{TS was partially supported by the Marie Sk{\l}odowska-Curie Individual Fellowship grant $\sharp$655310 and a start-up fund from the School of Mathematics, University of Manchester, UK}
\title{Fourier transform of self-affine measures}
\author{Jialun Li}
\address{Institute of Mathematics, University of Z\"urich,  Z\"urich, Switzerland}
\email{jialun.li@math.uzh.ch}
\author{Tuomas Sahlsten}
\address{School of Mathematics, Alan Turing Building, University of Manchester, Oxford Road, Manchester, UK}
\email{tuomas.sahlsten@manchester.ac.uk}
\begin{document}

\maketitle

\begin{abstract}
Suppose $F$ is a self-affine set on $\mathbb{R}^d$, $d\geq 2$, which is not a singleton, associated to affine contractions $f_j = A_j + b_j$, $A_j \in \mathrm{GL}(d,\mathbb{R})$, $b_j \in \mathbb{R}^d$, $j \in \mathcal{A}$, for some finite $\mathcal{A}$. We prove that if the group $\Gamma$ generated by the matrices $A_j$, $j \in \mathcal{A}$, forms a proximal and totally irreducible subgroup of $\mathrm{GL}(d,\mathbb{R})$, then any self-affine measure $\mu = \sum p_j f_j \mu$, $\sum p_j = 1$, $0 < p_j < 1$, $j \in \mathcal{A}$, on $F$ is a Rajchman measure: the Fourier transform  $\widehat{\mu}(\xi) \to 0$ as $|\xi| \to \infty$. As an application this shows that self-affine sets with proximal and totally irreducible linear parts are sets of rectangular multiplicity for multiple trigonometric series. Moreover, if the Zariski closure of $\Gamma$ is connected real split Lie group in the Zariski topology, then $\widehat{\mu}(\xi)$ has a power decay at infinity. Hence $\mu$ is $L^p$ improving for all $1 < p < \infty$ and $F$ has positive Fourier dimension. In dimension $d = 2,3$ the irreducibility of $\Gamma$ and non-compactness of the image of $\Gamma$ in $\mathrm{PGL}(d,\mathbb{R})$ is enough for power decay of $\widehat{\mu}$. The proof is based on quantitative renewal theorems for random walks on the sphere $\mathbb{S}^{d-1}$.
\end{abstract}

\section{Introduction and the main results}

\subsection{Spectrum of self-affine measures} \label{sec:spectrum}

Let $f_j = A_j + b_j$, $j \in \cA$, be a finite collection of affine contractions of $\R^d$ associated to non-singular matrices $A_j \in \mathrm{GL}(d,\R)$ with $\|A_j\| < 1$ and translation vectors $b_j \in \R^d$. The \textit{self-affine set} $F$ associated to $\{f_j : j \in \cA\}$ is the unique non-empty compact set $F \subset \R^d$ satisfying the invariance
$$F = \bigcup_{j \in \cA} f_j (F).$$
Moreover, a natural class of measures $\mu$ associated to $\{f_j : j \in \cA\}$ are the \textit{self-affine measures}, that is, those probability measures $\mu$ on $F$ satisfying $\mu = \sum_{j \in \cA} p_j f_j \mu$ for some weights $0 < p_j < 1$, $j \in \cA$, with $\sum_{j \in \cA} p_j = 1$, which appear in the dimension theory of self-affine sets. The geometry of self-affine sets and measures has been extensively studied since their introduction and popularisation after the work of Falconer \cite{Fa1, Fa2, Fa3}, see also the survey \cite{Fa4}. Recently, a useful connection to the dynamics of the stationary measure (Furstenberg measure) on the projective space has been developed in the study of self-affine sets. This was first introduced and popularised by the work of Falconer and Kempton \cite{FK} originally appeared in 2015 (related ergodic theoretic ideas were also simultaneously developed by B\'ar\'any \cite{barany}) and then it has been crucial in the analysis of the behaviour of self-affine sets and measures. See for example the recent works of B\'ar\'any, Hochman, Rapaport \cite{BHR}, B\'ar\'any-K\"aenm\"aki \cite{baranykaenmaki} and Feng \cite{feng} just to name a few. This paper follows this line of research but develops the connection to random walks on matrix groups further. In particular we will apply the recent advancements in the theory of random walks on reductive groups (see for example the book \cite{benoistquint} by Benoist and Quint) to study the \textit{spectral theory} of the self-affine sets and measures.  

Let $\mu$ be a probability measure on $\T^d=\R^d/\Z^d$. Consider the \textit{$L^2$ spectrum} $\sigma(\mu,L^2)$ of the multiplier $f \mapsto f \ast \mu$, in $L^2(\T^d)$, that is,
$$\sigma(\mu,L^2) = \overline{\{\widehat{\mu}(\m) : \m \in \Z^d\}} \subset \C,$$
where $\widehat{\mu}$ is the \textit{Fourier coefficient} of $\mu$, defined by
$$\widehat{\mu}(\m) := \int_{\T^d} e^{-2\pi i \m \cdot x} \, d\mu(x), \quad \m \in \Z^d.$$
The $L^2$ spectrum for singular measures $\mu$ in general has been a widely studied notion, in particular, depending on the behaviour of $\widehat{\mu}(\m)$ at infinity, it has various applications throughout analysis and geometry. The original motivation comes from Riemann's uniqueness problem \cite{Riemann, Cantor} of trigonometric series (see Section \ref{sec:uniqueness} below) where the asymptotic behaviour of $\widehat{\mu}$ is linked to the multiplicity and uniqueness of the support of the measure $\mu$. Moreover, the decay of $\widehat{\mu}$ at infinity can be linked to the prevalence of normal numbers or vectors in the support $\supp \mu$ of $\mu$, see the work of Davenport-Erd\"os-LeVeque \cite{DEL}, to the existence of arithmetic patterns in $\supp \mu$ \cite{LP09} and absolute continuity of fractal measures \cite{shmerkin}. Finally, in the harmonic analysis of singular measures $\mu$, H\"ormander \cite{H} proved that if $\widehat{\mu}(m) \to 0$ as $|m| \to \infty$, then $\sigma(\mu,L^2)$ also agrees with the $L^1$ spectrum of the multiplier $f \mapsto f \ast \mu$. This motivated (see the works of Sarnak \cite{Sarnak} and Sidorov-Solomyak \cite{SidorovSolomyak}) to study the $L^2$ spectrum of various specific singular measures $\mu$ such as the Cantor-Lebesgue measure and Bernoulli convolutions. More recently the decay results have been useful in establishing results on quantum resonances in quantum chaos, see for example Bourgain and Dyatlov \cite{BourgainDyatlov}.

For a probability measure $\mu$ on $\R^d$, we define its \textit{Fourier transform} by
\[\hat\mu(\xi)=\int e^{-2\pi i\xi\cdot x}\, d\mu(x)\quad \xi\in\R^d. \]
The heuristic idea behind the decay of Fourier transform at infinity is commonly explained by some ``chaotic properties'' within the singular measure $\mu$, such as when $\mu$ is given by a random measure associated to some random process such as Brownian motion or other random construction \cite{Kahane1,Kahane2,Kahane3,ShmerkinSuomala,FOS,FS} or by an equilibrium state to a sufficiently non-linear dynamical system, see the various recent works on these  \cite{Kaufman1,Kaufman2,JordanSahlsten,SahlstenStevens,BourgainDyatlov,Li1,Li2,LNP}. A classical result that really highlights this phenomenon is the theorem of Salem-Zygmund \cite{SZ} and Piatetski-Shapiro \cite{PS}, which says that the Cantor-Lebesgue measure $\mu_\lambda$ on the standard middle $\lambda$-Cantor set, $0 < \lambda < 1/2$, satisfies $\widehat{\mu}_\lambda(\xi) \to 0$ if and only if $\lambda^{-1}$ is not a Pisot number, that is, a real number whose powers approximate integers exponentially fast. A similar result also holds for Bernoulli convolutions by the work of Salem \cite{Salem} and Erd\"os \cite{Erdos}. Hence some sort of ``non-concentration'' to arithmetic progressions (lattices) should characterise Fourier decay at infinity. In a recent work \cite{LS} we developed this connection further and in the setting of general self-similar measures, we proved the Fourier decay of the self-similar measures as long as the random walk defined by the contractions does not concentrate on a lattice/arithmetic progressions. 

In the self-affine world we see that the correct analogue for the ``chaos'' assumption to gain Fourier decay of $\widehat{\mu}$ requires some form of irreducibility of the subgroup generated by the random matrix products from $A_j$, $j \in \cA$, in $\mathrm{GL}(d,\R)$. More formally this can be achieved if the subgroup generated by $A_j$, $j \in \cA$,
$$\Gamma = \langle A_j: j \in \cA \rangle < \mathrm{GL}(d,\R)$$
forms an irreducible and proximal group (see Section \ref{sec:rensphere} for precise definitions). In this case we can prove the following result on the spectrum of self-affine measures:

\begin{theorem}\label{thm:main}
Suppose $F$ is a self-affine set on $\R^d$, $d\geq 2$, which is not a singleton, associated to affine contractions $f_j = A_j + b_j$, $j \in \cA$. If $\Gamma = \langle A_j: j \in \cA \rangle$ forms a proximal and totally irreducible subgroup of $\mathrm{GL}(d,\R)$, then 
$$\widehat{\mu}(\xi) \to 0, \quad |\xi| \to \infty.$$
\end{theorem}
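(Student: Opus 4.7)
The plan is to iterate the self-affine invariance $\mu = \sum_{j\in\cA} p_j f_j\mu$ on the Fourier side and reduce the decay of $\widehat{\mu}$ to an equidistribution statement for the dual random walk on the sphere $\mathbb{S}^{d-1}$. A single application of the Fourier transform gives
\[
\widehat{\mu}(\xi) = \sum_{j \in \cA} p_j \, e^{-2\pi i \xi \cdot b_j}\, \widehat{\mu}(A_j^T\xi),
\]
and iterating $n$ times yields
\[
\widehat{\mu}(\xi) = \sum_{\omega\in\cA^n} p_\omega \, e^{-2\pi i \xi\cdot b_\omega}\,\widehat{\mu}(A_\omega^T\xi),
\]
with $A_\omega = A_{\omega_1}\cdots A_{\omega_n}$ and $b_\omega$ the iterated translation. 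Decomposing $\xi = r\eta$ with $r = |\xi|$ and $\eta \in \mathbb{S}^{d-1}$, the norm $|A_\omega^T \xi| = r\|A_\omega^T\eta\|$ is controlled by the multiplicative cocycle $\log\|A_\omega^T\eta\|$, which under proximality and total irreducibility of $\Gamma$ is (by Furstenberg--Kesten) a random walk with strictly positive drift $\lambda > 0$.

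Next I would introduce a stopping time $\tau = \tau(r,\eta)$ along each sample word $\omega$ that records the first $n$ for which $r\|A_\omega^T\eta\| \le 1$; by large deviations this concentrates near $n \approx (\log r)/\lambda$. Grouping Bernoulli words by this stopping time rewrites $\widehat{\mu}(\xi)$ (up to a negligible tail contribution) as an expectation
\[
\widehat{\mu}(\xi) \approx \E\!\left[e^{-2\pi i \xi \cdot b_{\omega_1\cdots\omega_\tau}}\,\widehat{\mu}\bigl(A_{\omega_1\cdots\omega_\tau}^T\xi\bigr)\right],
\]
in which the argument of the inner $\widehat{\mu}$ is now of bounded size, so this factor stays uniformly bounded. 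A renewal theorem for random matrix products on $\mathbb{S}^{d-1}$ in the tradition of Kesten and Guivarc'h--Le Page should then identify the limiting joint law of the stopped direction $A_{\omega_1\cdots\omega_\tau}^T\eta/\|A_{\omega_1\cdots\omega_\tau}^T\eta\|$ and the logarithmic overshoot as the product of the Furstenberg stationary measure $\nu$ on $\mathbb{S}^{d-1}$ and Lebesgue measure on $\R$.

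The Rajchman conclusion then reduces to showing that the oscillatory phase $e^{-2\pi i \xi\cdot b_{\omega_1\cdots\omega_\tau}}$ averages to $0$ as $r\to\infty$. Since $F$ is not a singleton, some translation difference $b_i - b_j$ is nonzero, and total irreducibility of $\Gamma$ prevents the $b_\omega$'s from concentrating on a proper affine subspace or arithmetic progression in the direction $\xi/|\xi|$; combined with H\"older regularity of $\nu$ (Guivarc'h) this should force the phases to equidistribute modulo $\Z$, yielding $\widehat{\mu}(\xi)\to 0$. I expect the hardest step to be precisely this oscillatory-sum estimate: in contrast to the self-similar analogue of \cite{LS}, where the random walk only dilates $\xi$, the matrix products here simultaneously rotate and rescale, so the radial and spherical coordinates must be decoupled by a quantitative version of the sphere renewal theorem before any Weyl-type non-concentration argument can take effect. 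This decoupling is exactly what the quantitative renewal theorems on $\mathbb{S}^{d-1}$ announced in the abstract appear designed to deliver.
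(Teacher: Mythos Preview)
Your high-level framework---iterate the self-affinity, introduce a stopping time driven by the dual cocycle on $\mathbb{S}^{d-1}$, and invoke a quantitative renewal theorem---is correct, but the reduction you propose has a genuine gap. The claim that ``the Rajchman conclusion reduces to showing the phase $e^{-2\pi i\xi\cdot b_\omega}$ averages to $0$'' is not justified: the factor $\widehat{\mu}(A_\omega^T\xi)$, while bounded, depends nontrivially on $\omega$ and cannot be pulled out of the expectation. Even granting that reduction, controlling $\E[e^{-2\pi i\xi\cdot b_\omega}]$ is essentially the original problem in disguise: since $b_\omega=\sum_k A_{\omega_1}\cdots A_{\omega_{k-1}}b_{\omega_k}$, as the stopping time grows $b_\omega$ converges to the coding point $\pi(\omega)\in F$ and the expectation collapses back to $\widehat{\mu}(\xi)$ itself. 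No regularity statement for the Furstenberg measure on the sphere says anything about the distribution of the translation vectors $b_\omega$, and the heuristic that total irreducibility ``prevents the $b_\omega$'s from concentrating on a proper affine subspace'' does not convert into a usable oscillatory-sum estimate.

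The paper eliminates the translations entirely by a Cauchy--Schwarz step \emph{before} any renewal theory is invoked. With $\cW_t(z)$ the stopping-time partition one has
\[
|\widehat{\mu}(\xi)|^2 \le \iint \sum_{w\in\cW_t(z)} p_w\, e^{-2\pi i\, A_w^\top\xi\cdot(x-y)}\,d\mu(x)\,d\mu(y),
\]
and since $f_w(x)-f_w(y)=A_w(x-y)$ the vectors $b_\omega$ have disappeared. The near-diagonal region $|x-y|\le\delta$ is handled by a Frostman bound for $\mu$ (this is the only place where ``$F$ is not a singleton'' enters). Off the diagonal, for fixed $x,y$ the $w$-sum is precisely the renewal operator $\E_t$ applied to $g(z,u)=\exp(-2\pi i\, s\langle z,z_1\rangle e^u)$ with $z_1=(x-y)/|x-y|$. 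The renewal theorem identifies the main term as an integral over the stationary measure $\nu_z$ and the overshoot $u$; Guivarc'h regularity of $\nu_z$ then ensures $|\langle hy,z_1\rangle|$ is typically bounded below, so the frequency is large, and integrating $e^{iCe^u}$ against Lebesgue in $u$ gives the decay. In short, the oscillation driving Fourier decay lives in the \emph{linear} part of the walk paired with the geometry of $\mu$ through the vector $x-y$, not in the translations $b_\omega$; your renewal-theoretic intuition is on target, but it must be deployed after the Cauchy--Schwarz reduction, not on the raw expansion.
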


In the dimension theory of self-affine measures, especially in the recent work of B\'ar\'any-Hochman-Rapaport \cite{BHR} the same assumption on irreducibility of $\Gamma$ is required to prove the Hausdorff dimension of the self-affine measure $\mu$ agrees with its Lyapunov dimension.  In our setting, after an application of Cauchy-Schwarz inequality, we observe that the Fourier transform $\widehat{\mu}(\xi)$ of a self-affine measure $\mu$ reduces to a probabilistic expression depending on $\xi$, which appears commonly in renewal theory of random walks on the $d-1$ sphere $\mathbb{S}^{d-1}$. In these cases the irreducibility of $\Gamma$ is known to be crucial to establish a renewal theorem that proves Theorem \ref{thm:main}.

\subsection{Power decay of $\widehat{\mu}$ and the Zariski closure of $\Gamma$}\label{sec:power}

Theorem \ref{thm:main} does not say anything about the decay rate of $\widehat{\mu}$ at infinity. Having a quantitative rate of Fourier decay at infinity for $\mu$ can be important property in various applications. A classical application comes in harmonic analysis. Stein asked (see for example \cite[pp. 122-123]{Stein})  to characterise measures $\mu$ which are \textit{$L^p$ improving}, that is, those $\mu$ for which for some $1 < p < \infty$ there exists $r > p$ such that
$$f \ast \mu \in L^r(\R^d) \quad \text{for all } f \in L^p(\R^d).$$ 
In other words, the multiplier $f \mapsto f \ast \mu$ is a bounded operator from $L^p(\R^d) \to L^r(\R^d)$. One can use a complex interpolation argument to show that if a measure $\mu$ is $L^p$ improving for some $p$, then it is $L^p$ improving for all $1 < p < \infty$, see \cite[pp. 122-123]{Stein}. If a measure $\mu$ on $\R^d$ has a \textit{power Fourier decay} at infinity, that is, for some $\alpha > 0$ we have
$$|\widehat{\mu}(\xi)| = O(|\xi|^{-\alpha}), \quad |\xi| \to \infty,$$
then $\mu$ is $L^p$ improving with $r$ defined by $1/r = 1/p - \alpha$ and $p \leq 2 \leq r$, see Zygmund \cite[26, Vol. II, p. 1271]{Zygmund} for a proof.
Moreover, if one can prove the Fourier transform of $\mu$ has power decay at infinity, then the support of $\mu$ has positive \textit{Fourier dimension}, see the book \cite{Mattila} by Mattila for a history and survey of this notion in connection to various problems in geometric measure theory, fractal geometry and harmonic analysis. Positivity of the Fourier dimension of the support of $\mu$ implies  $\mu$ almost every vector in the support of $\mu$ is normal in every base, see for example \cite{DEL}. In the self-affine case we are considering establishing power Fourier decay of $\mu$ could be helpful in the study of absolute continuity for classes of self-affine measures, see the discussion in Section \ref{sec:abscont} below. 

In the proof of Theorem \ref{thm:main} we observe that the key point, where we obtain a slower rate than polynomial in the Fourier decay, comes from the rate of convergence for the renewal theorem for random walks on $\mathbb{S}^{d-1}$. In particular these rates can be improved when assuming $\R$-split for the Zariski closure of $\Gamma$, see Definition \ref{def:Rsplit} for a formal definition. Using the exponential speed in the renewal theorem established in \cite{Li2}, which is based on the discretized sum-product estimates invented by Bourgain and developed in \cite{hesaxce} and \cite{Li3}, we give a power decay.

\begin{theorem}\label{thm:mainquant}
Suppose $F$ is a self-affine set, which is not a singleton, associated to affine contractions $f_j = A_j + b_j$, $j \in \cA$. If the Zariski closure of $\Gamma = \langle A_j: j \in \cA \rangle$ is a connected $\R$-splitting reductive group acting irreducibly on $\R^d$, we have polynomial decay of Fourier transform: there exists $\alpha > 0$ such that $$|\widehat{\mu}(\xi)| = O(|\xi|^{-\alpha}), \quad |\xi| \to \infty.$$
In particular, $\mu$ is $L^p$ improving for all $1 < p < \infty$ and $F$ has positive Fourier dimension. 
\end{theorem}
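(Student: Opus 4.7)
The plan is to run the same reduction as in the proof of Theorem \ref{thm:main}, with the qualitative renewal input replaced by its quantitative refinement under the $\R$-split hypothesis. Iterating the self-affinity identity $\hat\mu(\xi)=\sum_j p_j e^{-2\pi i\xi\cdot b_j}\hat\mu(A_j^T\xi)$ produces, for every $n\geq 1$,
\[\hat\mu(\xi)=\E_\omega\bigl[e^{-2\pi i\xi\cdot f_{\omega|_n}(0)}\,\hat\mu(A_{\omega|_n}^T\xi)\bigr],\]
where $\omega$ is drawn from the Bernoulli measure on $\cA^{\mathbb N}$ with marginal $(p_j)$. Choosing $n$ to be the stopping time $T_\xi=\inf\{k\geq 0:\|A_{\omega|_k}^T\xi\|\leq 1\}$ reduces the estimation of $|\hat\mu(\xi)|$ to the analysis of an averaged oscillatory integral whose phase is a cumulative sum along the trajectory of the random walk $\{A_{\omega|_k}^T\xi\}_{k=0}^{T_\xi}$ on $\Sh^{d-1}\times\R_+$ driven by the action of $\{A_j^T\}_{j\in\cA}$ weighted by $(p_j)$. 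An application of Cauchy--Schwarz then converts the problem into bounding the decay rate, in $|\xi|$, of the distribution of this walk at the stopping time.

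The essential new input is the quantitative renewal theorem of \cite{Li2}, which under the $\R$-split assumption on the Zariski closure of $\Gamma$ gives an exponential rate of convergence of the joint law at the stopping time towards its limit profile: the product of the Furstenberg stationary measure $\nu$ on $\Sh^{d-1}$ and the renewal equilibrium $\rho$ on $\R$. The proof of this exponential rate in \cite{Li2} rests on a spectral gap for the transfer operator of the projective walk on H\"older functions on $\Sh^{d-1}$, obtained through the discretised sum--product estimates of Bourgain in the sharpened forms of He--de Saxc\'e \cite{hesaxce} and \cite{Li3}. The $\R$-split hypothesis on the Zariski closure is precisely what makes this sum--product input applicable. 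Since the renewal level here is $t=\log|\xi|$, exponential decay in $t$ translates directly to polynomial decay in $|\xi|$; substituting this rate into the identity above produces the bound $|\hat\mu(\xi)|=O(|\xi|^{-\alpha})$ for some explicit $\alpha>0$.

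The two stated corollaries are then standard: $L^p$-improvement on the full range $1<p<\infty$ follows by the complex interpolation argument of Zygmund recalled in Section \ref{sec:power}, while positivity of the Fourier dimension of $F=\supp\mu$ is immediate from the definition.

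The main obstacle is uniformity of the quantitative renewal theorem in the starting direction $u=\xi/|\xi|\in\Sh^{d-1}$: since $\xi$ is arbitrary, the exponential rate must hold with constants not degenerating as $u$ varies, which amounts to a quantitative regularity (H\"older continuity) statement for $\nu$. This regularity is itself provided by the spectral gap supplied by the sum--product estimates of \cite{hesaxce,Li3}. A secondary point is the bookkeeping needed to translate the renewal exponent into an explicit final Fourier exponent $\alpha$, which is routine once the uniform renewal rate is in hand.
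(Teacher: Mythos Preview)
Your outline captures the overall architecture correctly (iterate self-affinity, stop along the random walk, apply Cauchy--Schwarz, invoke a quantitative renewal theorem), but it contains a real gap at the most delicate step: you have not explained why the \emph{main term} in the renewal theorem is small.

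Concretely, with your choice of stopping time $T_\xi=\inf\{k:\|A_{\omega|_k}^T\xi\|\le 1\}$, after Cauchy--Schwarz one is left with
\[
\iint \E_\omega\bigl[e^{-2\pi i\, A_{\omega|_{T_\xi}}^T\xi\cdot(x-y)}\bigr]\,d\mu(x)\,d\mu(y).
\]
At the stopping time the vector $A_{\omega|_{T_\xi}}^T\xi$ has norm of order $1$, and $|x-y|\le\diam F$, so the integrand is a unimodular function of bounded frequency. The quantitative renewal theorem tells you that the joint law of direction and overshoot is exponentially close (in $t\approx\log|\xi|$) to a fixed limit profile; but integrating a bounded Lipschitz test function against a fixed probability measure yields $O(1)$, not $o(1)$. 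Exponential convergence to the limit therefore does \emph{not} by itself give any decay of $\hat\mu(\xi)$.

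The paper resolves this by introducing an intermediate scale: one writes $|\xi|=se^t$ and stops when $|A_w^T\xi|\approx s$ with $s\to\infty$ (not $s=1$). The test function then becomes $g_{s_1}(z,u)=\exp(-2\pi i\,s_1\langle z,z_1\rangle e^u)$ with $s_1\approx s|x-y|$, which oscillates rapidly in the overshoot variable $u$. Since the limit profile in the renewal theorem has an absolutely continuous $u$-marginal, one gains decay from the $u$-integral by van der Corput, after using Guivarc'h regularity of the stationary measure (Lemma~\ref{lma:guivarch}) to discard the small set of directions nearly orthogonal to $z_1$, and the Frostman bound (Lemma~\ref{lma:frostman}) to discard the near-diagonal $|x-y|\le\delta$. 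The price is that the Lipschitz norms entering the renewal error now grow polynomially in $s$ (indeed $\|\partial_{uu}g_{s_1}\|_{\mathrm{Lip}}=O(s^3)$), so one must balance: the paper takes $s=|\xi|^{\eps_1/(6+\eps_1)}$ so that $e^{-\eps_1 t}=s^{-6}$ exactly absorbs these norms. This balancing is the heart of the quantitative argument and is not ``routine bookkeeping''.

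A secondary comment: the issue you flag as the ``main obstacle'' --- uniformity of the renewal theorem in the starting direction --- is not in fact an obstacle here; Proposition~\ref{prop:stopexp} already has error terms independent of $x\in X$. The regularity of the stationary measure enters not for uniformity but, as above, to control the main term.
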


\begin{remark} Aside from assuming $F$ is not a singleton, no other separation conditions are assumed in Theorems \ref{thm:main} and \ref{thm:mainquant} like the strong separation condition. However, if in Theorem \ref{thm:mainquant} the self-affine set $F$ satisfies the strong separation condition, then $\alpha$ can be made independent of the translations $b_j \in \R^d$, $j \in \cA$. This is similar to the idea behind the so called \textit{affinity dimension}, that is, the linear parts $A_j$ decide the dimension property of the self-affine sets or measures \cite{BHR}, see Remarks \ref{rem:s2} and \ref{rem:s2pf} below.
\end{remark}

Let us now discuss the validity of the assumptions of Theorem \ref{thm:mainquant} below:

\begin{remark} \begin{itemize}
	\item[(1)] Connectedness in Zariski topology and irreducibility imply strong irreducibility. Splitness and irreducibility imply proximality, which explains that the condition of Theorem \ref{thm:mainquant} is stronger than Theorem \ref{thm:main}.
	
	\item[(2)] The assumption on $\R$-splitting and connectedness of the Zariski closure of $\Gamma$ is always satisfied if the group $\Gamma$ is Zariski dense in $\mathrm{GL}(d,\R)$, which is a connected $\R$-splitting reductive group.
		
	\item[(3)] When $d=2$, the $\R$-splitting of the Zariski closure is actually equivalent to the condition of linear part in B\'ar\'any-Hochman-Rapaport \cite{BHR}, because a subgroup of $\mathrm{GL}(2,\R)$ whose image in $\mathrm{PGL}(2,\R)$ is non-compact and totally irreducible is always Zariski dense in $\mathrm{PGL}(2,\R)$. Due to the hypothesis that the linear part is contracting $\|A_j\|<1$, its Zariski closure is the whole $\mathrm{GL}(2,\R)$, which satisfies our assumption by the above remark.
	
		\item[(4)]Due to the structure of algebraic subgroups of $\mathrm{GL}(d,\R)$, for $d=3$ a totally irreducible subgroup whose image in $\mathrm{PGL}(3,\R)$ is not compact still always has a $\R$-splitting Zariski closure. Because the semisimple part of the connected component of the Zariski closure is conjugated to $\mathrm{SL}(3,\R)$ or $\mathrm{SO}(1,2)$. In the first case, due to $\|A_j\|<1$ the Zariski closure is the whole group $\mathrm{GL}(3,\R)$. In the second case, the Zariski closure is conjugated to $\R^*\times \mathrm{SO}(1,2)$, where $\R^*\times \mathrm{SO}(1,2)$ means $\R^*\id_3\times \mathrm{SO}(1,2)$ and which is $\R$-splitting and algebraically connected.  Here $\R^* = \R \setminus \{0\}$. Hence for $d=2,3$, Theorem \ref{thm:mainquant} holds under the condition of total irreducibility and the image in $\mathrm{PGL}(d,\R)$ is non-compact.
	
		\item[(5)]Starting from $d=4$, there is the algebraic subgroup $\mathrm{SO}(1,3)$ of $\mathrm{GL}(4,\R)$ which is not $\R$-splitting. The Zariski closure can also be algebraically non-connected, for example $\R^*\times\mathrm{O}(1,3)\simeq\R^*\times\mathrm{SO}(1,3)\rtimes \Z/2\Z$ whose algebraically connected component containing identity is $\R^*\times\mathrm{SO}(1,3)$. 
		
				\item[(6)]The group $\mathrm{SL}(2,\C)$ can be seen as a subgroup of $\mathrm{GL}(4,\R)$ and the action of $\mathrm{SL}(2,\C)$ on $\R^4$ is not proximal. Hence for $d=4$, the proximal condition in Theorem \ref{thm:main} is used to exclude this case. In dimension $d=4$, the situation becomes much more complicate due to the appearance of different type of Lie groups.
		\end{itemize}
\end{remark}

We conjecture that the power decay of the Fourier transform of the self-affine measure is still true under the condition of Theorem \ref{thm:main} (i.e. without the $\R$-splitting of the Zariski closure) or even without the proximal condition. For this purpose, we would need to generalise the renewal theorem with exponential error term to more general situation, which is a prospect for a future work.

\subsection{Uniqueness of multiple trigonometric series} \label{sec:uniqueness}

Let us now discuss a direct application of Theorem \ref{thm:main} to the uniqueness of trigonometric series. In classical Fourier analysis of functions $f : \T^d \to \R$, the trigonometric series associated to coefficients $a_\m$, $\m \in \Z^d$, are of the form
\begin{equation}\label{eq:trigsumser}\sum_{\m \in \Z^d} a_\m e^{2\pi i \langle x , \m \rangle}\end{equation}
and one asks if, for example, $a_\m = \widehat{f}(\m)$, the Fourier coefficient of $f$, then do series \eqref{eq:trigsumser} converge to $f$ and in what sense (pointwise, $L^2$, and so on). Thus it is natural to ask about the \textit{uniqueness} of such trigonometric series, that is, what sets $F \subset \T^d = \R^d / \Z^d$ satisfy the following property: If $a_\m,b_\m \in \C$, $\m \in \Z^d$, are chosen such that
\begin{align}\label{eq:uniq}\sum_{\m \in \Z^d}a_\m e^{2\pi i \langle x , \m \rangle} = \sum_{\m \in \Z^d} b_\m e^{2\pi i \langle x , \m \rangle},\end{align}
for all $x \in \T^d \setminus F$, then $a_\m = b_\m$ for all $\m \in \Z^d$. Any set $F$ satisfying the Uniqueness Problem is called a set of \textit{uniqueness}. Otherwise $F$ is a set of \textit{multiplicity}. In dimension $d = 1$ this problem originated in the seminal works of Riemann, Cantor and Young \cite{Riemann,Cantor,Young}. In these works it is proved that any countable set in $\R$ is a set of uniqueness. After these works the problem has become a popular topic in Fourier analysis, see for example the survey of K\"orner \cite{korner} and the references therein.

In the higher dimensional setting there are various ways to generalise the uniqueness problem, and depending on the way we sum, one has very different outcomes, see \cite{AshWang,AshWang2} for discussion and references. If the summation in \eqref{eq:uniq} is \textit{rectangular}, that is, we sum over $\m \in R_n$ of boxes $R_n \to \Z^d$ as $n \to \infty$, i.e. $R_n = \prod_{j = 1}^d B_{\Z}(0,r_j^n)$ with $r_j^n \to \infty$ as $n \to \infty$ for all $j =1,\dots,d$, then Ash, Freiling and Rinne \cite{AFR} proved that $F = \emptyset$ is a set of (rectangular) uniqueness. Another way to generalise this is to consider \textit{spherical summation} in \eqref{eq:uniq}, that is, sum over $\m \in B_{\Z^d}(0,r_n)$ for some radii $r_n \to \infty$ as $n \to \infty$. In this case Bourgain \cite{Bourgain1996} established that $F = \emptyset$ is a set of (spherical) uniqueness and later Ash and Wang \cite{AshWang} generalised this to all finite sets $F$. See Ash's survey \cite{Ash} for more historical overview and references of the uniqueness problem in recent literature.

For the uncountable (fractal) case, in dimension $d = 1$, the works of Salem, Zygmund \cite{SZ} et al. have been attempting to give classifications of the sets of uniqueness and multiplicity. The known fractal examples of sets of multiplicity were the middle $\lambda$ Cantor sets $C_\lambda$ with $0 < \lambda < 1/2$ where the middle $1-2\lambda$ part of $[0,1]$ is removed and that $\lambda^{-1}$ is not a Pisot number. Moreover, recently by using methods from random walks on the additive group $\R$, we proved in \cite{LS} that every self-similar set on $\R$, which is not a singleton and the contractions $f_j(x) = r_j x + b_j$ defining $F$ satisfy that $\log r_j / \log r_\ell$ is irrational for some $j \neq \ell$, then $F$ is a set of multiplicity.  In all of these cases, proving the multiplicity of a compact set $F$ is closely related to the spectrum $\sigma(\mu,L^2)$ of measures $\mu$ supported on $F$. In particular, using Menshov's localisation argument, see for example the survey of K\"orner \cite{korner}, if a probability measure $\mu$ on $F$ satisfies $\widehat{\mu}(\m) \to 0$ as $|\m|\to \infty$, then for any sequence of rectangles $R_n \to \Z^d$, as $n\to \infty$, we have
\begin{align}\label{eq:fourconv}\lim_{n \to \infty}\sum_{\m \in R_n}\widehat{\mu}(\m) e^{2\pi i \langle x , \m \rangle} = 0\end{align}
for any $x \notin F$. Here we note that \cite[Theorem 3]{korner} is stated only for one dimension, but any rectangular Fourier series is a product of $d$ one dimensional Fourier series and restriction of $\mu$ on these spaces has also Fourier decay. Hence any $F \subset \T^d$ supporting a probability measure $\mu$ with $\widehat{\mu}(\m) \to \infty$, as $|\m| \to \infty$, must be a set of (rectangular) multiplicity. Thus as a Corollary of Theorem \ref{thm:main} we have

\begin{cor}\label{cor:multi}
Suppose $F$ is a self-affine set on $\T^d$, which is not a singleton, associated to affine contractions $f_j = A_j + b_j$, $j \in \cA$.  If $\Gamma = \langle A_j: j \in \cA \rangle$ forms a proximal and totally irreducible subgroup of $\mathrm{GL}(d,\R)$, then $F$ is a set of rectangular multiplicity.
\end{cor}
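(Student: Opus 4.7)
The plan is to feed Theorem \ref{thm:main} directly into the Menshov localisation machinery already sketched in Section \ref{sec:uniqueness} immediately before the statement. First, since $F \subset \T^d$ is not a singleton, I may fix any weights $0 < p_j < 1$ with $\sum_{j \in \cA} p_j = 1$ and form a self-affine probability measure $\mu = \sum_j p_j f_j \mu$ supported on $F$. Applying Theorem \ref{thm:main} to the contractions $f_j = A_j + b_j$ gives $\widehat{\mu}(\xi) \to 0$ as $|\xi| \to \infty$ on $\R^d$, and restricting to integer frequencies $\xi = \m \in \Z^d$ yields that the Fourier coefficients of $\mu$, viewed as a probability measure on $\T^d$, also decay to zero; thus $\mu$ is a Rajchman probability measure carried by $F$.

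Next I will run the Menshov localisation argument in the rectangular setting. For $R_n = \prod_{j=1}^d B_\Z(0, r_j^n)$ with $r_j^n \to \infty$, the partial sum
\[
S_n(x) \;=\; \sum_{\m \in R_n}\widehat{\mu}(\m)\, e^{2\pi i \langle x , \m \rangle}
\]
factors as an iterated one-dimensional partial Fourier sum in each of the $d$ coordinate variables. Since every one-dimensional coordinate marginal of $\mu$ inherits the Rajchman property from $\mu$, a successive application of the one-dimensional Menshov localisation theorem \cite[Theorem 3]{korner} in each coordinate produces
\[
\lim_{n\to\infty} S_n(x) \;=\; 0 \quad \text{for every } x \in \T^d \setminus F,
\]
exactly as flagged in the paragraph preceding the corollary.

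Finally, I argue by contradiction. Suppose $F$ were a set of rectangular uniqueness. Choose $a_\m = \widehat{\mu}(\m)$ and $b_\m = 0$ for every $\m \in \Z^d$. By the previous step both trigonometric series have rectangular partial sums tending to $0$ on $\T^d \setminus F$, so \eqref{eq:uniq} holds there; the uniqueness hypothesis then forces $a_\m = b_\m$ for all $\m$. But $a_0 = \widehat{\mu}(0) = \mu(\T^d) = 1 \neq 0 = b_0$, a contradiction, so $F$ must be a set of rectangular multiplicity. The only non-routine ingredient is Theorem \ref{thm:main} itself; the only minor care needed in the corollary is verifying that the multidimensional rectangular version of Menshov's localisation is indeed the coordinatewise iterate of the classical one-dimensional statement, a point the discussion preceding the corollary has already addressed.
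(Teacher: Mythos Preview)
Your proposal is correct and follows essentially the same route as the paper: the paper derives the corollary directly from Theorem~\ref{thm:main} by taking a self-affine measure $\mu$ on $F$, invoking Menshov's localisation (iterated coordinatewise, citing \cite[Theorem 3]{korner}) to obtain \eqref{eq:fourconv} off $F$, and concluding multiplicity. Your explicit contradiction with $a_\m=\widehat\mu(\m)$, $b_\m=0$ just spells out the final implication the paper leaves as ``Hence any $F \subset \T^d$ supporting a probability measure $\mu$ with $\widehat{\mu}(\m) \to 0$ \dots\ must be a set of (rectangular) multiplicity.''
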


Here, the torus $\T^d$ is identified with the cube $[0,1)^d$ in $\R^d$ and $F$ is contained in $[0,1)^d$.
Corollary \ref{cor:multi} leaves open the opposite case: if we assume that the group $\Gamma$ preserves some proper subspace of $\R^d$, that is, when $\Gamma = \langle A_j: j \in \cA \rangle$ lacks irreducibility, then does this imply actually that $F$ is a set of (rectangular) uniqueness? This would be the analogue of result of Salem-Zygmund \cite{SZ} and Piatetski-Shapiro \cite{PS} that the middle $\lambda$-Cantor set is a set of multiplicity if and only if $\lambda^{-1}$ is not a Pisot number. 
However, as far as we know this has not been developed further after this so we conjecture a self-affine set is a set of (rectangular) uniqueness if the assumptions of Theorem \ref{thm:main} fail. Another direction where to look at Corollary \ref{cor:multi} would be to consider the spherical summation in \eqref{eq:uniq} instead of the rectangular one. Here one faces some obstacles as it is not clear how to deduce a spherical analogue of \eqref{eq:fourconv} from the decay of $\widehat{\mu}$ at infinity. For example, in Weisz's survey \cite[Page 27]{W} on multiple trigonometric series we see that spherical summations do not have such nice approximation formulae in higher dimensions as one dimension used in Menshov's localisation argument for $\mu$.

\subsection{Absolute continuity of self-affine measures} \label{sec:abscont}

A motivation for establishing the power decay of Fourier transform of a fractal measure $\mu$ appears often in the study of absolute continuity of $\mu$ such as Bernoulli convolutions arising from overlapping self-similar iterated function systems, see for example the works of Shmerkin \cite{shmerkin}, Shmerkin-Solomyak \cite{ShmerkinSolomyak} and Saglietti, Shmerkin and Solomyak \cite{SSS}. See also the recent work on absolute continuity of self-similar measures in dimension at least $3$ by Lindenstrauss and Varju \cite{LindenstraussVarju}, which is more closely related to our setting.

In Shmerkin's original work \cite{shmerkin} on the absolute continuity of Bernoulli convolutions, the core idea is to separate the Bernoulli convolution into the convolution of two self-similar measures. The additive convolution structure of the self-similar measure is crucial in this method, which enables us to combine the full dimension and Fourier decay to obtain the absolute continuity (also known as \textit{Erd\"os-Kahane method}, see \cite{shmerkin} for a good survey of the topic). The method of Saglietti, Shmerkin and Solomyak in \cite{SSS} on the absolute continuity of non-homogeneous self similar measure is to try to transfer every thing to homogeneous self-similar case and it is done by using the commutativity of $\R$. For example, if we have two map with different contraction ratios $r_1 \neq r_2$, then the twice iteration has four maps and only three different contraction ratios $r_1^2,r_2^2,r_1r_2$, with two different maps with the same contraction ratio $r_1r_2$. In higher dimensional self-similar case, Lindenstrauss and Varju \cite{LindenstraussVarju} also used the same idea, that is, using the commutativity of $\R$, to extract a part of the IFS with the same contraction ratio.

However, in the self-affine case we are considering, it is the non-commutativity of the linear parts that gives the power Fourier decay of the self-affine measure in Theorem \ref{thm:mainquant} by the renewal theorem proved on $\Sh^{d-1}$. We are unable to extract a part of the IFS with the same contraction ratio, so the methods used by Saglietti, Shmerkin and Solomyak \cite{SSS} and Lindenstrauss and Varju \cite{LindenstraussVarju} cannot be adapted in our setting. Any kind of convolution structure of the self-affine measure is difficult to find in this case, which would probably require a new method involving different type of separation of the self-affine measure. We expect that there should be a result for self-affine measures analogous to the work of Lindenstrauss and Varju \cite{LindenstraussVarju}, say, a result with a parametrised family of self-affine measures or a random version saying the absolute continuity of $\mu$ holds almost surely, where Theorem \ref{thm:mainquant} would be applied to. 

\subsection*{Acknowledgements} We thank Jean-Fran\c{c}ois Quint for useful discussions related to the absolute continuity of the stationary measure. The second author also thanks Elon Lindenstrauss for useful discussion about the paper \cite{LindenstraussVarju} back in 2014 while visiting the Hebrew University of Jerusalem. Part of this work was conducted while the second author was visiting Institut de Mathématiques de Bordeaux in January 2019, and the authors would like to thank the hospitality of the institution.

\section{Preliminaries on self-affine geometry}

\subsection{Symbolic notations and products of matrices}

Fix an iterated function system $\{f_j : j \in \cA\}$ consisting of maps $f_j = A_j + b_j$ for $A_j \in \mathrm{GL}(d,\R)$ and $b_j \in \cA$ with $\|A_j\| < 1$ for all $j \in \cA$, where we write $\|\cdot\|$ as the operator norm of matrices. Let $F$ be the compact non-empty self-affine fractal associated to $\{f_j : j \in \cA\}$, that is,
$$F = \bigcup_{j \in \cA} f_j (F).$$
We say that \textit{$\mu$ is a self-affine measure on $F$} if there exist weights $0 < p_j < 1$, $j \in \cA$, such that $\sum_{j \in \cA} p_j = 1$, and $\mu$ satisfies the relation
$$\mu = \sum_{j \in \cA} p_j f_j \mu.$$

\begin{definition}[Word spaces $\cA^n$, $\cA^*$]
Write $\cA^*$ the set of all finite words and $\cA^n$ the set of all length $n$ words. Let $w \in \cA^n$. Define the composition
$$f_w := f_{w_1} \circ \dots \circ f_{w_n} = A_w + b_w,$$
where
$$A_w := A_{w_1}\dots A_{w_n}$$
and $b_w$ is the corresponding translation component. 

\subsection{Regularity of self-affine measures} Self-affine measures enjoy the following weak form of Ahlfors-David regularity, which is a folklore result that we prove here. Currently we found a reference by Feng and Lau \cite{FengLau}, where this was proven for self-similar measures on $\R^d$, and we follow the same idea. 	We only need the upper bound in our proof and the upper bound can also be found in Aoun-Guivarc'h \cite{aounguivarch}, where they prove regularity of the stationary measures which contains our self-affine measures. 

The following notation is only used in this subsection. We write
$$F_w := f_w (F), \quad w \in \cA^*.$$
Note that the diameter $\diam(F_w) \leq D\|A_w\|$, where $D$ is the diameter of $F$.
\end{definition}

Denote
$$r_w := \|A_w\|,$$
which will shrink exponentially as $|w| \to \infty$ due to $\|A_j\| < 1$ for all $j \in \cA$.

\begin{definition}[Words $\cA_r$ with prescribed matrix norm] For $r > 0$ write the collection of words $w$ corresponding to roughly norm $r$ matrices $A_w$ as:
$$\cA_r := \{w \in \cA^* : r_w < r \leq r_{\tilde w}\},$$
where $\tilde w$ is the length $|w| - 1$ word obtained from $w$ by removing the last letter of $w$. 
\end{definition}

Note that if we fix small enough $r > 0$, then due to the definition of self-affine measure, we have the following invariance:
\begin{align}\label{eq:aridentity}\mu = \sum_{w \in \cA_r} p_w f_w \mu.\end{align}

\begin{lemma}\label{lma:frostman}
Assume the self-affine set $F$ is not a singleton and $\mu$ is a self-affine measure on $F$. Then there exist constants $C_1,C_2 > 0$, $r_0 > 0$, and exponents $0 < s_2 < s_1$ such that
$$C_1 r^{s_1} \leq \mu(B(x,r)) \leq C_2 r^{s_2}, \quad x \in F, \quad 0 < r \leq r_0.$$
\end{lemma}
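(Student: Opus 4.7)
The plan is to iterate the self-affine invariance $\mu = \sum_{|w|=n} p_w f_w \mu$ at depth $n$ matched to the scale $r$, using the contraction bound $\|A_w\| \leq c_1^n$ where $c_1 := \max_j\|A_j\| < 1$, and the weight bound $p_{\min}^n \leq p_w \leq p_{\max}^n$. The lower bound comes from a single cylinder containing $x$ from the inside; the upper bound is more delicate and is essentially the content of Feng-Lau \cite{FengLau} in the self-similar case and Aoun-Guivarc'h \cite{aounguivarch} in our setting.

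For the lower bound, fix $x \in F$ and any symbolic coding $\omega \in \cA^{\N}$ of $x$. Let $n = n(x,r)$ be the smallest integer with $D\|A_{\omega|_n}\| \leq r$, where $D = \diam F$. Minimality together with $\|A_{\omega|_n}\| \leq c_1^n$ forces the uniform bound $n \leq \log(D/r)/|\log c_1| + 1$, independent of $\omega$ and $x$. Since $\diam F_{\omega|_n} \leq D\|A_{\omega|_n}\| \leq r$ and $x \in F_{\omega|_n}$, the inclusion $F_{\omega|_n} \subset B(x,r)$ follows, so
\[
\mu(B(x,r)) \;\geq\; p_{\omega|_n} \;\geq\; p_{\min}^{\,n} \;\geq\; C_1\, r^{s_1}, \qquad s_1 := \frac{|\log p_{\min}|}{|\log c_1|}.
\]

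For the upper bound, the crucial non-atomicity input comes from $F$ not being a singleton. Pick $y_1, y_2 \in F$ with $|y_1 - y_2| = D > 0$ and $N$ large enough that $c_1^N < 1/3$. Since $F = \bigcup_{|w|=N} F_w$, there exist $u, v \in \cA^N$ with $y_1 \in F_u$, $y_2 \in F_v$; combined with $\diam F_u, \diam F_v \leq D c_1^N$, the triangle inequality forces $\dist(F_u, F_v) > D/3 =: \delta$. Any ball of radius $\delta/2$ then hits at most one of $F_u, F_v$, and $\mu(F_u) \geq p_u$, $\mu(F_v) \geq p_v$ give the base non-atomicity
\[
\phi(\delta/2) \;\leq\; 1 - \min(p_u, p_v) \;=:\; 1 - \eta \;<\; 1, \qquad \phi(r) := \sup_{x \in \R^d} \mu(B(x,r)).
\]
To upgrade this into a power decay one applies the self-affine relation recursively: the ellipsoidal pullbacks $A_w^{-1}(B(x-b_w, r))$ at depth $n \asymp \log(1/r)$ propagate the non-atomicity at scale $\delta/2$ through the singular-value structure of $A_w$, yielding $\phi(r) \leq C_2 r^{s_2}$ for some $s_2 > 0$, following \cite{FengLau, aounguivarch}.

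The main obstacle lies in the upper bound: without a separation assumption, the cylinders $F_w$ at a common scale may overlap arbitrarily and cannot be counted directly. The iteration argument sidesteps this by cascading the base non-atomicity through the self-affine relation, at the unavoidable cost of the upper exponent $s_2$ being strictly smaller than $s_1$.
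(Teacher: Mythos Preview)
Your lower bound is correct and matches the paper's argument (the paper uses the norm-based word collection $\cA_{r/D}$ instead of fixed depth $n$, but the idea is identical).

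The upper bound, however, has a genuine gap. You correctly isolate two separated cylinders $F_u,F_v$ and deduce the single-scale bound $\phi(\delta/2)\le 1-\eta$, but your description of how to upgrade this to power decay does not work as stated. Going to depth $n\asymp\log(1/r)$ is the wrong move: the pullback $f_w^{-1}(B(x,r))$ is an ellipsoid whose largest semi-axis is $r\|A_w^{-1}\|$, and $\|A_w^{-1}\|$ grows geometrically in $n$. At depth $n\asymp\log(1/r)$ this can be of order $1$ or larger, so there is no way to feed the base bound $\phi(\delta/2)\le 1-\eta$ back into these pullbacks. The phrase ``propagate the non-atomicity through the singular-value structure'' is not an argument, and the references you cite do not proceed this way either.

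The paper's approach, which you are one step away from, is to stay at the \emph{fixed} level $N$ where $F_u,F_v$ live and iterate in $r$. For any $r\le\delta/2$ and any $x$, the ball $B(x,r)$ still misses one of $F_u,F_v$ --- say $F_u$ --- so in the decomposition $\mu(B(x,r))=\sum_{|w|=N}p_w\,\mu(f_w^{-1}B(x,r))$ the term $w=u$ vanishes. Each surviving pullback lies in a ball of radius $r\|A_w^{-1}\|\le r/c$ with $c:=1/\max_{|w|=N}\|A_w^{-1}\|$ fixed, giving the doubling inequality
\[
\phi(r)\le \bigl(1-\min(p_u,p_v)\bigr)\,\phi(r/c),\qquad 0<r\le \delta/2.
\]
Iterating this yields $\phi(r)\le C_2 r^{s_2}$ with $s_2=\log t/\log c$, $t=1-\min(p_u,p_v)$. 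The point is that $c$ depends only on the finitely many inverse norms at level $N$, so it is a genuine constant and the recursion closes.
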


\begin{proof}
Since $F$ is not a singleton, we can find two words $w^1,w^2 \in \cA_\eta$ for some $0 < \eta \leq 1$ such that $F_{w^1} \cap F_{w^2} = \emptyset$. Since $F_{w^1}$ and $F_{w^1}$ are compact, we can find $0 < r_0 < r_{\min} := \min\{r_j : j \in \cA\}$ such that for any $x \in \R^d$ the ball $B(x,r_0)$ intersects at most one of the two sets. Write
$$\phi(r) = \sup_{x \in \R^d} \mu(B(x,r)), \quad 0 < r \leq r_0$$
and denote
$$c :=1/ \max\{\|A_w^{-1}\| : w \in \cA_\eta\}.$$
Thus for any $x \in \R^d$ and $0 < r < r_0$ we have either that $B(x,r) \cap F_{w^1} = \emptyset$ or $B(x,r) \cap F_{w^2} = \emptyset$. If $B(x,r) \cap F_{w^1} = \emptyset$, then by the identity \eqref{eq:aridentity} over the words in $\cA_r$ we have
\begin{align*}
\mu(B(x,r)) &= \sum_{w \in \cA_r, F_w \cap B(x,r) \neq \emptyset} p_w \mu(f_w^{-1}(B(x,r)) \\
& \leq \sum_{w \in \cA_r, w \neq w^1} p_w \mu(f_w^{-1} B(x,r)) \\
& \leq \sum_{w \in \cA_r, w \neq w^1} p_w \phi(r/c) \\
& = (1-p_{w^1}) \phi(r/c).
\end{align*}
If $B(x,r) \cap F_{w^2} = \emptyset$, a symmetric argument also shows 
$$\mu(B(x,r)) \leq (1-p_{w^2}) \phi(r/c).$$
Hence we have proved for $t = \max\{1-p_{w^1}, 1-p_{w^2}\}$ that $\mu(B(x,r)) \leq t \phi(r/c)$ so we have the doubling condition
$$\phi(r) \leq t \phi(r/c), \quad 0 < r \leq r_0.$$
This gives the upper bound we claimed using 
$$C_2 = \phi(r_0)/(tr_0^{s_2}) \quad \text{and} \quad s_2 = \frac{\log t}{\log c},$$ 
since for all $0 < r \leq r_0$ by choosing $n \in \N$ such that $c^n r_0 < r \leq c^{n-1} r_0$ we obtain:
$$\mu(B(x,r)) \leq \mu(B(x,c^{n-1} r_0)) \leq \phi(c^{n-1} r_0) \leq t^{n-1} \phi(r_0) = \frac{\phi(r_0)}{tr_0^{s_2}} (c^nr_0)^{s_2} \leq C_2 r^{s_2}.$$

For the lower bound, let $D = \diam(F)$. Then for every $x \in F$ and $0 < r \leq r_0$ there exists a word $w \in \cA_{r/D}$ such that $x \in F_w$. Thus diameter $\diam(F_w) < r$ and $F_w \subset B(x,r)$. This gives us
$$\mu(B(x,r)) \geq \mu(F_w) \geq p_w = (r_w)^{\frac{\log p_w}{\log r_w}} \geq \Big(\frac{r_{\min} r}{2D}\Big)^{\frac{\log p_w}{\log r_w}} \geq C_1 r^{s_1}$$
with $s_1 = \max\{\log p_j / \log r_j : j \in \cA\}$ and $C_1 = \min\{1,r_{\min} / (2D)\}^{s_1}$.

\end{proof}
\begin{remark}\label{rem:s2}
	When the self-affine set $F$ satisfies the \textit{strong separation condition}, that is $F_j \cap F_\ell = \emptyset$ for all $j \neq \ell$, then we can take $\eta=1$ in the above proof and $w_1,w_2$ be two different single words. Then the constants $t,c$ are independent of the translation part $b_j$, hence the exponent $s_2$ is also independent of the translation part. 
\end{remark}

\section{Quantitative renewal theorem for random walks on the sphere $\mathbb{S}^{d-1}$}\label{sec:rensphere}
\def\lf{{\mathrm{Lip}}}
\def\ck{{\mathrm{Lip}}}
\def\Car{\mathrm{E}_\mathrm{C}\,}

The main method in Theorem \ref{thm:main} and Theorem \ref{thm:mainquant} is to reduce the analysis to a renewal theorem (and their quantitative rates) for random walks on the sphere $X:= \Sh^{d-1}$. There are pioneer work of Kesten \cite{kesten} for general Markov process, and recent works of Guivarc'h-Le Page \cite{GLP} and Boyer \cite{Boyer} for the same situation as ours.

Here we need a quantitative version. A similar situation is the renewal theorem for random walks on the projective space $\P(\R^d)$, which was done by Li \cite[Proposition 4.17]{Li1}. Here we will give the analogous renewal theorem in $X$ and later in the paper describe the modifications we need to introduce to the proof of \cite[Proposition 4.17]{Li1} to get the renewal theorem on $X$. 

\begin{definition}[Irreducible and proximal subgroups]Let $\Gamma$ be a subgroup of $\mathrm{GL}(d,\R)$.
\begin{itemize}
\item[(1)] We call $\Gamma$ \textit{strongly irreducible} (totally irreducible) if the group $\Gamma$ does not fix a non-empty union of a finite number of proper subspaces of $\R^d$. 
\item[(2)] The subgroup $\Gamma$ is \textit{proximal}, if there exists an element $g$ in $\Gamma$ such that $g$ has a unique eigenvalue of greatest absolute value among all the eigenvalues of $g$, and this eigenvalue is simple. 
\end{itemize}
These two conditions are important in the theory of products of random matrices. If we want to know whether a subgroup satisfies these condition, then it is sufficient to check the Zariski closure (see \cite{margulis}), which is much simpler.
\end{definition}

We fix an euclidean norm on $\R^d$. For an element $g$ in $\mathrm{GL}(d,\R)$, define the norm cocycle by 
$$\sigma(g,z):=\log\frac{|gv|}{|v|}$$ 
for $z$ in $X$ and $v\in \R^+ z$, where $\R^+ z = \{tz : t > 0\}$. We will write $gz$ for a point in $X$ given by $gv/|gv|$ for $v\in\R^+z$, which gives an action of $G$ on $X$. This notation should not be confused with $gv$ which is a vector in $\R^d$.

Let $\lambda$ be a Borel probability measure on $\mathrm{GL}(d,\R)$ with compact support such that $\Gamma_\lambda$, the group generated by the support of $\lambda$, acts proximally and strongly irreducibly on $\R^d$. From now on, we will always keep these assumptions on $\lambda$.
A Borel probability measure $\nu$ on $X$ is called $\lambda$-stationary if
\[\nu=\lambda*\nu:=\int g_*\nu \,d \lambda(g). \]
Let $\nu$ be a $\lambda$-stationary measure on $X$. Equip $X$ with the induced distance from the euclidean distance on $\R^d$. One important regularity of the stationary measure is the following
\begin{lemma}[Guivarc'h regularity]\label{lma:guivarch}
	Suppose $\nu$ is a $\lambda$-stationary measure. Then there exist $C,\alpha>0$ such that for every hyperplane $Y$ in $X$ and $r>0$,
	\[\nu(x \in X : d(x,Y)\leq r)\leq Cr^\alpha. \]
\end{lemma}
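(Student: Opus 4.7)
The strategy is to translate the hyperplane statement into a uniform negative-moment bound, and then control that moment by combining the stationarity relation $\nu=\lambda^{*n}*\nu$ with the large deviation estimates available under proximality and total irreducibility. Any hyperplane in $X=\Sh^{d-1}$ has the form $Y_{y^*}:=\{x\in X:\langle x,y^*\rangle=0\}$ for a unit normal $y^*\in\Sh^{d-1}$, and on the sphere $d(x,Y_{y^*})\asymp |\langle x,y^*\rangle|$. Hence by Chebyshev's inequality the lemma is equivalent to the uniform moment bound
$$M_\alpha:=\sup_{y^*\in\Sh^{d-1}}\int_X|\langle x,y^*\rangle|^{-\alpha}\,d\nu(x)<\infty$$
for some $\alpha>0$.

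To bound $M_\alpha$ I would use the cocycle identity
$$\langle g\cdot x,y^*\rangle=\frac{|g^Ty^*|}{|gv|}\langle v,z_g^*\rangle,\qquad z_g^*:=\frac{g^Ty^*}{|g^Ty^*|},$$
where $v\in\R^d$ denotes the unit representative of $x\in X$. Combined with $\nu=\lambda^{*n}*\nu$ this gives
$$\int|\langle x,y^*\rangle|^{-\alpha}\,d\nu(x)=\int\!\!\int\!\left(\frac{|gv|}{|g^Ty^*|}\right)^{\!\alpha}|\langle v,z_g^*\rangle|^{-\alpha}\,d\nu(v)\,d\lambda^{*n}(g).$$
Under proximality and total irreducibility, the top Lyapunov exponent $\kappa_1$ of $\lambda$ is simple (Furstenberg--Guivarc'h), and Le~Page's large deviation theorem supplies constants $C,c>0$ such that for every $\eta>0$,
$$\lambda^{*n}\bigl\{g:\bigl|\tfrac{1}{n}\log|g^Ty^*|-\kappa_1\bigr|\ge\eta\bigr\}\le Ce^{-cn},$$
uniformly in $y^*\in\Sh^{d-1}$, and similarly for $\log|gv|$ uniformly in $v\in\spt\nu$. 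Consequently the prefactor $(|gv|/|g^Ty^*|)^\alpha$ is trapped in $[e^{-2\alpha\eta n},e^{2\alpha\eta n}]$ outside an exceptional set of $g$ of $\lambda^{*n}$-measure $\le Ce^{-cn}$.

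To extract a finite bound I would work with the truncation $\phi_{\alpha,\varepsilon}(y^*):=\int(\max(|\langle x,y^*\rangle|,\varepsilon))^{-\alpha}\,d\nu(x)\le\varepsilon^{-\alpha}$, apply the cocycle identity above, split the $g$-integral into the good set and the exceptional set, use the exponential mixing of the random walk on $\Sh^{d-1}$ (a classical consequence of proximality and strong irreducibility, cf.\ Benoist--Quint \cite{benoistquint}) to replace $\phi_{\alpha,\varepsilon}(z_g^*)$ on the good set by its integral against the dual stationary measure, and bound the exceptional contribution via the crude estimate $(|gv|/|g^Ty^*|)^\alpha\le\|g\|^\alpha|g^Ty^*|^{-\alpha}$ together with compactness of $\spt\lambda$. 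Choosing $\alpha,\eta$ small and $n$ large yields an inequality of the form $\sup_{y^*}\phi_{\alpha,\varepsilon}\le\tfrac{1}{2}\sup_{y^*}\phi_{\alpha,\varepsilon}+C$ with $C$ independent of $\varepsilon$, whence $M_\alpha<\infty$ by letting $\varepsilon\to 0$ via monotone convergence. The main obstacle is that the prefactor $e^{2\alpha\eta n}$ on the good set is $>1$, so contraction cannot come from that piece alone; the argument succeeds only because the random walk on $\Sh^{d-1}$ genuinely mixes exponentially to its stationary measure, which is essentially the content of Aoun--Guivarc'h \cite{aounguivarch}, whose estimate one could equivalently cite directly.
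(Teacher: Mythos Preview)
The paper's proof is a two-line reduction: push $\nu$ forward to $\P(\R^d)$ via the quotient $\pi:X\to\P(\R^d)$, note that $\pi_*\nu$ is the Furstenberg measure there, and invoke the classical Guivarc'h regularity on projective space (\cite{guivarch} or \cite[Theorem~14.1]{benoistquint}). Nothing further is argued.

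Your proposal is far more ambitious: you are essentially attempting to reprove Guivarc'h's theorem from scratch. The reduction to the uniform negative-moment bound $M_\alpha<\infty$ and the cocycle identity are correct, and this is indeed the skeleton of the classical argument. However, the closing step has a real gap. You correctly identify that large deviations only pin the prefactor $(|gv|/|g^Ty^*|)^\alpha$ to an interval around $1$, so no contraction comes from that term. Your proposed fix via exponential mixing of $y^*\mapsto z_g^*$ is circular: the mixing estimate is paid for by the H\"older norm of the test function, but $\|\phi_{\alpha,\varepsilon}\|_{C^\gamma}\asymp\varepsilon^{-\alpha-\gamma}\to\infty$, and the would-be ``constant'' $\int\phi_{\alpha,\varepsilon}\,d\nu^*=\iint(\max(|\langle x,z^*\rangle|,\varepsilon))^{-\alpha}\,d\nu(x)\,d\nu^*(z^*)$ is itself unbounded as $\varepsilon\downarrow 0$ unless one already knows the regularity being proved. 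In the actual argument (see \cite[Chapter~14]{benoistquint}) the contraction is obtained not from mixing but from a drift inequality $\int\min(1,\delta(gx,y^*))^{-t}\,d\lambda^{*n}(g)\le c\,\min(1,\delta(x,y^*))^{-t}+C$ with $c<1$, whose proof uses the \emph{simplicity} of the top Lyapunov exponent via a geometric comparison of $\delta(gx,y^*)$ with $\delta(x_g^+,y^*)$ and $\delta(x,y_g^-)$. Your final remark --- that one could simply cite \cite{aounguivarch} --- is correct and amounts to exactly the citation strategy the paper adopts.
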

\begin{proof}
	Let $\pi$ be the projection from the sphere $X$ to the projective space $\P(\R^d)$. Then the pushforward measure $\pi_*(\nu)$ is the Furstenberg measure on $\P(\R^d)$. The regularity of $\nu$ comes from the regularity of the Furstenberg measure (See \cite{guivarch} or Theorem 14.1 in \cite{benoistquint}).  
\end{proof}
Let $\sigma_{\lambda}$ be the first Lyapunov constant of $\lambda$, which is given by 
\[\sigma_{\lambda}=\int_G\int_X \sigma(g,x)\,d\lambda(g)\,d\nu(x)=\lim_{n\rightarrow+\infty}\frac{1}{n}\log\|g_1\cdots g_n\| \]
almost surely, where $g_i$ are i.i.d. random variables with the same distribution $\lambda$, see \cite[Theorem 4.28]{benoistquint} or \cite{Fur}. 
\subsection{Stopping time for the random walk} \label{sec:stoppingtime}
Set $V=\R^d$, $d\geq 2$. Recall that $\lambda$ is a Borel probability measure on $\mathrm{GL}(V)$ with compact support such that $\Gamma_\lambda$ acts proximally and strongly irreducibly on $V$. Suppose that every element $g$ in $\supp\lambda$ satisfies $\|g\|<1$.
Let $X_1,X_2,\dots$ be the random variables taking values in $\mathrm{GL}(V)$ with the same distribution $\lambda$.

Define the matrix product
$$S_n = X_n\dots X_2X_1.$$
Because the operator norm of $X_j$ is less than $1$ almost surely, the norm of $S_n$ decrease with respect to $n$.
For $t > 0$ and $x$ in $ X$ define a stopping time $n_t :  X \to \N$ by 
$$n_t(x) = \inf\{n \in \N : -\sigma(S_n,x)>t\}.$$
The advantage of $X_n\cdots X_1x$ with respect to $X_1\cdots X_nx$ is that the limit distribution is simpler to understand. The action is just multiplying a matrix in the left.

We define a renewal operator for stopping time for $x\in X$ and $t>0$
\[ \E_t f(x)=\E f(S_{n_t(x)}x,\sigma(S_{n_t(x)},x)+t)=\sum_{n\geq 0}\int_{\sigma(g,x)\geq - t> \sigma(hg,x) } f(hgx,\sigma(hg,x)+t)\,d\lambda(h) \,d\lambda^{*n}(g), \]
where $\lambda^{\ast n}$ is the $n$-fold self-convolution of $\lambda$, defined by $\lambda^{\ast n} = \lambda \ast \lambda^{\ast(n-1)}$ for $n \geq 1$ and $\lambda^{\ast 0}$ is the Dirac mass on the identity matrix.

We need to study the random walk and $\lambda$-stationary measures on the sphere $X$ instead of projective spaces $\P(V)$. Recall that a convex cone in $V$ is called proper if it does not contain a line. Guivarc'h and Le Page \cite[Proposition 2.14]{GLP} proved that if $\Gamma_\lambda$ preserves a proper convex cone in $V$ then there exists two $\lambda$-stationary $\lambda$-ergodic measures $\nu_1,\nu_2$ on the sphere $X$. There will be two continuous positive function $p_1$ and $p_2$ (For the characterization of $p_1$ and $p_2$, see \cite{Boyer} Lemma 2.13) on $X$ such that $p_1+p_2=1$, $p_i|{\supp \nu_j}=\delta_{i,j}$, where $\delta_{i,j}$ is the Kronecker symbol, and for $j=1,2$, $x\in X$
\[ p_j(x)=\int p_j(gx)\,d\lambda(g). \]
On the contrary, if $\Gamma_\lambda$ does not preserve any proper convex cone in $V$, then the $\lambda$ stationary measure on $ X$ is unique.

Let us now write formally the renewal theorem in our situation. For this purpose define the following measures $\nu_x$:

\begin{definition}\label{defi:nux}
	For $x\in X$, we define
	\[\nu_x :=p_1(x)\nu_1+p_2(x)\nu_2 \text{ in the first case,  otherwise } \nu_x=\nu.\]
\end{definition}
These measures $\nu_x$ are the limit distributions for the random walk on $X$ starting from $x$, following the law of $\lambda$. When we are given a Borel probability measure $\tau$ with compact support $\supp \tau$, the \textit{size of the support of $\tau$} is defined by the maximal norm:
	\[|\supp\tau|:=\sup\{\|g\| : g\in\supp\tau\}. \]

In the renewal theorem, we need to assume some regularity from the test functions, so for this purpose let us define the \textit{Lipschitz norm} of $f : X \times \R \to \C$ by  
	\[\|f\|_{\mathrm{Lip}}=\|f\|_\infty+\sup_{(x,v)\neq(x',v')}\frac{|f(x,v)-f(x',v')|}{d(x,x')+|v-v'|}.\]
	Using this notation, we have the following:

\begin{prop}[Renewal theorem irreducibility and proximality]\label{prop:stop} 
	Let $\lambda$ be a Borel probability measure on $\mathrm{GL}(V)$ with compact support, such that the group $\Gamma_\lambda$ acts proximally and strongly irreducibly on $V$. Suppose that every element $g$ in $\supp\lambda$ satisfies $\|g\|<1$.
	
	Let $ f$ be a continuous function on $X\times\R$ with $\| f\|_\lf$ finite. Then for $t>0$ and $x\in X$ we have
	\begin{align*}
	\E_t f(x) = \,\, & \frac{1}{|\sigma_{\lambda}|}\int_{X}\int_G\int^{-\sigma(h,y)}_{0} f(hy,\sigma(h,y)+u)\,\,d u\,\,d\lambda(h)\,\,d\nu_x(y)+o_t\| f\|_\lf,
	\end{align*}
	where $o_t$ tends to zero as $t$ going to $\infty$ and $o_t$ does not depend on $f$ and $x$.
\end{prop}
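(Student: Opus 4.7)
The plan is to adapt Li's renewal theorem on the projective space $\P(V)$ \cite[Proposition 4.17]{Li1} to the sphere $X=\Sh^{d-1}$, following the same Fourier--spectral strategy, but accounting for the fact that when $\Gamma_\lambda$ preserves a proper convex cone in $V$ there are two ergodic $\lambda$-stationary measures $\nu_1,\nu_2$ on $X$ rather than a unique one on $\P(V)$. This is exactly what the weighted limit measure $\nu_x=p_1(x)\nu_1+p_2(x)\nu_2$ from Definition \ref{defi:nux} encodes: starting from $x$, the random walk $S_n x$ equidistributes according to $\nu_j$ with probability $p_j(x)$, where $p_j$ are the $\lambda$-harmonic functions identified by Guivarc'h--Le Page.

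First I would rewrite the renewal operator through the Markov transfer operator
$$Pf(x,v):=\int_G f(gx,\,v+\sigma(g,x))\,d\lambda(g),$$
so that $\E_t f(x)=\sum_{n\geq 0} P^n Q_t f(x,0)$, where $Q_t$ records the overshoot at the first time the cocycle drops below $-t$. The core technical input is a uniform spectral gap for the complex-perturbed operator
$$P_\xi h(x):=\int_G e^{-i\xi\sigma(g,x)}h(gx)\,d\lambda(g),\quad \xi\in\R,$$
acting on a H\"older space $\cH^\alpha(X)$ for small $\alpha>0$. On $\P(V)$ this is Le Page's classical theorem; on $X$ it follows by splitting $\cH^\alpha(X)=\cH^\alpha_+(X)\oplus\cH^\alpha_-(X)$ into $\pm 1$-eigenspaces of the antipodal map, using $\cH^\alpha_+(X)\cong\cH^\alpha(\P(V))$ and running the same contraction argument on the orthogonal component. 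The Guivarc'h regularity (Lemma \ref{lma:guivarch}) supplies the H\"older control of stationary measures near hyperplanes that is needed in Le Page's argument on both summands.

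Given the spectral gap, the renewal theorem emerges from Fourier inversion in the $v$-variable. Splitting the $\xi$-integral into a small neighborhood of $0$ and its complement, the complement decays in $t$ by the spectral gap and contributes the error $o_t\|f\|_\lf$, while the expansion $1-P_\xi\approx -i\xi\sigma_\lambda$ as $\xi\to 0$ together with $\sigma_\lambda<0$ produces the normalization $1/|\sigma_\lambda|$. The spectral projector onto $\ker(P-1)$ at $\xi=0$ is $h\mapsto \int h\,d\nu_x$, since $P^n h\to \int h\,d\nu_x$ pointwise for continuous $h$ by the harmonic decomposition via $p_1,p_2$, and the inner integral $\int_0^{-\sigma(h,y)} f(hy,\sigma(h,y)+u)\,du$ is precisely the classical stationary-excess density of the overshoot integrated against the step law $d\lambda(h)\,d\nu_x(y)$.

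The main obstacle will be the cone-preserving regime, where $P$ admits two invariant probability measures rather than one: one must simultaneously track two simple eigenvalues of $P_\xi$ on the unit circle that coincide at $\xi=0$ and separate linearly away from it, and then verify that their combined contribution still reduces to a single factor $1/|\sigma_\lambda|$ against $\nu_x$. A second delicate point is uniformity of $o_t$ in $x\in X$ and its linear dependence on $\|f\|_\lf$, which requires careful bookkeeping of the operator norms in the Kato perturbation expansion of $P_\xi$. Modulo these adjustments, the proof of \cite[Proposition 4.17]{Li1} applies almost verbatim, combined with the spectral gap analysis on the sphere from \cite{GLP,Boyer}.
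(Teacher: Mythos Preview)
Your approach is essentially the paper's: the transfer operator $P_z$, the resolvent expansion $(Id-P_z)^{-1}=\dfrac{N_0}{\sigma_\lambda z}+U(z)$ near $z=0$, Fourier inversion in the $\R$-variable, and non-arithmeticity from proximality and strong irreducibility to ensure $1\notin\mathrm{spec}(P_{ib})$ for $b\neq 0$. The paper organizes this slightly differently: it first records the classical renewal theorem on $X$ in this form (Proposition~\ref{prop:1-pz}, essentially quoted from Boyer \cite{Boyer}), then derives from it a renewal theorem for the \emph{residue process} $\Car$ on $X\times\R^2$ (Proposition~\ref{prop:rescar}, following \cite[Prop.~4.17]{Li1}), and finally obtains Proposition~\ref{prop:stop} in one line by setting $f_1(x,v,u)=f(x,v+u)\rho(v)\rho(u)$ with a cutoff $\rho$ so that $\E_t f(x)=\Car f_1(x,t)$. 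Your antipodal even/odd splitting $C^\gamma(X)=C^\gamma_+\oplus C^\gamma_-$ is exactly the device the paper uses later (Step~1 in the proof of Lemma~\ref{prop:l2}) to reduce spherical estimates to projective ones.

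One point where you are making life harder than necessary: in the cone-preserving case there is no genuine eigenvalue bifurcation to track. Because $\sigma(g,-x)=\sigma(g,x)$, the cocycle descends to $\P(V)$, where the stationary measure is unique; hence the Lyapunov exponent attached to each of $\nu_1,\nu_2$ is the same $\sigma_\lambda$, and the resolvent has a single simple pole at $z=0$ with residue the rank-two projector $N_0f(x)=\int f\,d\nu_x=p_1(x)\int f\,d\nu_1+p_2(x)\int f\,d\nu_2$. There is no splitting of eigenvalues to first order and no need for a separate Kato analysis on two branches; the formula $(Id-P_z)^{-1}=N_0/(\sigma_\lambda z)+U(z)$ with $U$ analytic across $i\R$ already absorbs the two-dimensional invariant subspace. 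This is how the paper (via \cite{Boyer}) gets the single factor $1/|\sigma_\lambda|$ against $\nu_x$ without further work.
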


The proof of Proposition \ref{prop:stop} is postponed to Section \ref{sec:renewal} later.

\begin{remark}
	\begin{itemize}
	\item[(1)] The key point in renewal theorem is that the limit distribution of the jump $t+\log |S_{n_t}(x)|$ is absolute continuous. Our assumption of proximality and irreducibility are used to obtain non-arithmeticity, which is the crucial condition to obtain continuous limit distribution.
	
	\item[(2)] The above renewal theorem for stopping time without error term has already been proved in \cite[Theorem 4.8]{GLP}, by using the method in \cite{kesten}.
	
	\item[(3)] In our application, a quantitative version is needed. Proposition \ref{prop:stop} is only proved for $\mathrm{SL}_2(\R)$ in \cite{Li1} by using the method of transfer operator developed by Guivarc'h. The same strategy works under strongly irreducible and proximal condition. This is the most technical part, please see Section \ref{sec:renewal}.
	\end{itemize}
\end{remark}

If we know that the Zariski closure of the group $\Gamma$ is a connected $\R$-split reductive group, then we can use the spectral gap established in \cite{Li2} to obtain an exponential error term in the above Proposition \ref{prop:stop}. Recall the definition of real splitting reductive groups (\cite{borel}).

\begin{definition}[$\R$-split reductive groups]\label{def:Rsplit}
Let $G$ be an algebraic group and $\mathfrak{g}$ be its Lie algebra. We call $G$ real reductive if the group $G$ is defined over $\R$ and if the unipotent radical, the maximal connected nilpotent normal subgroup of $G$, is trivial. 

We call $G$ a $\R$-split reductive group if $G$ is real reductive and if there is a Cartan subgroup of $G$ splitting over $\R$, which means that $G$ contains a subgroup $G_0$ isomorphic to $(\R^*)^n$, where $\R^* = \R \setminus \{0\}$, such that the adjoint representations $\{\mathrm{Ad}(g) : g \in G_0 \}$ are simultaneously diagonalisable, and $G_0$ has finite index in the normalizer.
\end{definition}

Assuming in addition to the assumptions of Proposition \ref{prop:stop} the $\R$-split and connectedness for the Zariski closure for $\Gamma$, we can state the following quantitative renewal theorem using exponential error terms. Recall that the Zariski closure of a strongly irreducible group $\Gamma$ is always reductive, which explains our condition in the following proposition. A Borel probability measure $\lambda$ on an algebraic group $G$ is called Zariski dense if the group $\Gamma_\lambda$ is Zariski dense in $G$.

\begin{prop}[Renewal theorem for $\R$-splitting Lie groups on the sphere]\label{prop:stopexp}
	Let $G$ a connected reductive group defined and split over $\R$, which acts irreducibly on $V$.
	Let $\lambda$ be a Zariski dense Borel probability measure on $G$ with a compact support. Suppose that every element $g$ in $\supp\lambda$ satisfies $\|g\|<1$. 
	
	There exists $\eps_1>0$ such that the following holds. Let $f$ be a smooth function on $ X\times \R$. Then for $t>0$ and $x\in X$, we have
	\begin{align*}
	\E_t f(x) = \,\, &\frac{1}{|\sigma_{\lambda}|}\int_{X}\int_G\int^{-\sigma(h,y)}_{0} f(hy,\sigma(h,y)+u)\,d u\,d\lambda(h)\,d\nu_x(y)\\
	&+e^{-\eps_1 t}O(e^{4\eps_1|\supp f|}(\|f\|_{\mathrm{Lip}}+\|\partial_{uu}f\|_{\mathrm{Lip}})).
	\end{align*}
\end{prop}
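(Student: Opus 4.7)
The strategy I would follow runs parallel to the proof of \cite[Proposition 4.17]{Li1} for $\mathrm{SL}(2,\R)$, adapted to a general $\R$-split connected reductive $G$ acting irreducibly on $V$. The crucial new ingredient is a spectral gap on a vertical complex strip for the family of transfer operators, which is exactly what \cite{Li2} supplies under the present hypotheses.

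Concretely, I would first introduce the transfer operators
\[ P_s \phi(x) = \int_G e^{s\sigma(g,x)} \phi(gx)\, d\lambda(g), \quad s \in \C, \]
acting on Lipschitz functions on $X$, and the associated renewal kernel $\sum_{n\geq 0} P_s^n = (I-P_s)^{-1}$ in the region where the geometric series converges. A standard Fourier inversion in the variable $t$ combined with the decomposition of the last step in the stopping-time definition represents $\E_t f(x)$ in the form
\[ \E_t f(x) = \frac{1}{2\pi}\int_{\R} e^{i\xi t} \bigl[(I - P_{i\xi})^{-1} Q_{i\xi} f\bigr](x)\, d\xi, \]
where $Q_{i\xi}$ is an explicit operator encoding the integration of $f(hy,\sigma(h,y)+u)$ against $\lambda$ together with the Fourier transform of the indicator of $\sigma(g,x)\geq -t>\sigma(hg,x)$. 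The main term is then extracted from the behaviour of $(I-P_{i\xi})^{-1}$ near $\xi=0$: by proximality and strong irreducibility, $1$ is a simple isolated eigenvalue of $P_0$ on Lipschitz functions (classical, following \cite{benoistquint}), and analytic perturbation produces a leading eigenvalue $\lambda(s)$ with $\lambda(0)=1$ and $\lambda'(0)=-\sigma_\lambda<0$. Computing the residue at $s=i\xi=0$ yields
\[ \frac{1}{|\sigma_\lambda|}\int_X \int_G \int_0^{-\sigma(h,y)} f(hy,\sigma(h,y)+u)\, du\, d\lambda(h)\, d\nu_x(y), \]
where the $x$-dependence enters through the spectral projector onto the leading eigenspace, which under the cone/no-cone dichotomy of \cite{GLP,Boyer} is precisely $\nu_x$ of Definition \ref{defi:nux}.

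The main technical obstacle is the exponential error term. Here I would invoke the spectral-gap result of \cite{Li2}: under the hypotheses that $G$ is connected, $\R$-split, reductive and $\lambda$ is Zariski dense, the resolvent $(I-P_s)^{-1}$ extends meromorphically to a strip $|\Rea(s)|<\eps_1$ with only a simple pole at $s=0$, and satisfies polynomial bounds in $|\mathrm{Im}(s)|$ on the remainder of the strip. These bounds ultimately rest on discretised sum--product estimates \`a la Bourgain, He--de Saxc\'e \cite{hesaxce} extended in \cite{Li3}. Granted this strip of analyticity, I would shift the contour of Fourier inversion from $\R$ to $\R + i\eps_1$; the exponential $e^{i\xi t}$ then produces the factor $e^{-\eps_1 t}$. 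The shifted integral is controlled by two integrations by parts in $u$, which yield the $\|\partial_{uu}f\|_\lf$ norm (making the $\xi$-integral convergent) while the factor $e^{4\eps_1|\supp f|}$ comes from the Fourier--Laplace transform of $f(\cdot,u)$ under the contour shift: each shift in $\xi$ by $\eps_1$ costs at most $e^{\eps_1|\supp f|}$, and the stopping-time structure together with the operator $Q_{i\xi}$ forces one to absorb up to four such shifts.

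The principal place where I expect difficulty is transporting the strip-spectral-gap of \cite{Li2}, which is usually stated for random walks on the projective space $\P(V)$, to the sphere $X=\Sh^{d-1}$ in the presence of the cone/no-cone dichotomy. Since the projection $\pi:X\to\P(V)$ is $2$-to-$1$ and the $\Gamma_\lambda$-action on $X$ can be non-ergodic, I would follow the scheme of \cite{Boyer} and split the Lipschitz functions on $X$ according to the two $\lambda$-stationary ergodic components $\nu_1,\nu_2$, noting that on each component $P_s$ is conjugate, up to a compact correction, to the transfer operator on $\P(V)$, so that the strip-spectral-gap of \cite{Li2} lifts componentwise and the coefficients $p_j(x)$ naturally appear in the main term. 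Once this bookkeeping is in place, the rest of the argument is a direct adaptation of \cite[Proposition 4.17]{Li1}.
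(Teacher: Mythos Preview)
Your overall strategy is correct and is the same engine the paper uses: the spectral gap for $P_z$ on a complex strip (from \cite{Li2}) feeds into a contour-shifted Fourier inversion to produce the exponential error. The paper, however, does not attack $\E_t$ directly. It proceeds through a chain of auxiliary operators: first a renewal theorem with exponential error for the plain renewal sum $Rf(x,t)=\sum_n\int f(gx,\sigma(g,x)+t)\,d\lambda^{*n}(g)$ (Proposition~\ref{prop:renerr}); then for a residue operator $E$ inserting one extra step $h$ (Lemma~\ref{prop:residue}); then for the cut operator $\Car$ carrying the stopping-time indicator (Proposition~\ref{prop:rescarexp}); and finally $\E_t f=\Car f_1$ with $f_1(x,v,u)=f(x,v+u)\rho(v)\rho(u)$. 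This modular route isolates the one genuinely delicate step, namely the indicator $\BB_{\{\sigma(g,x)\geq -t>\sigma(hg,x)\}}$, whose Fourier transform does not decay and so blocks the contour shift you describe. The paper mollifies it by $\phi_\delta$, compares $\Car f$ with $Ef_\delta$ at cost $O(\delta)+e^{-\eps t}O(\delta^{-3})$, and optimises $\delta=e^{-\eps t/4}$. This mollification, not ``four contour shifts'', is the source of the factor $4$ in $e^{4\eps_1|\supp f|}$ (one simply renames $\eps/4$ as $\eps_1$).

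Your proposed lifting of the spectral gap from $\P(V)$ to $X=\Sh^{d-1}$ via the ergodic decomposition $\nu_1,\nu_2$ is not what the paper does and is problematic as stated: when $\Gamma_\lambda$ preserves no proper cone there is a single stationary measure $\nu$ not supported in any half-sphere, so there is nothing to split; and even in the cone case the transfer operator acts on all of $C^\gamma(X)$, not merely on functions supported near $\supp\nu_j$. The paper's device (proof of Lemma~\ref{prop:l2}) is to decompose $f=f_1+f_2$ into parts even and odd under the antipodal involution $r$; for each $f_j$ the product $f_j(gw)\overline{f_j(hw)}$ is $r$-invariant, hence the key $L^2$ quantity $A_{g,h}$ descends to an integral against the Furstenberg measure on $\P(V)$, and the Fourier-decay input of \cite{Li2} applies directly (after one further reduction from reductive to semisimple by splitting off the central character $c(g)$).
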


The proof of Proposition \ref{prop:stopexp} is also done in the later Section \ref{sec:renewal}. We will now show how to apply Proposition \ref{prop:stop} and Proposition \ref{prop:stopexp} to prove Theorem \ref{thm:main} and Theorem \ref{thm:mainquant}. We will postpone the proofs of renewal theorems to the later sections.

\section{Proof of the main results}

\subsection{Strategy of the proof} Let us now prove the main results, namely, Theorem \ref{thm:main} and Theorem \ref{thm:mainquant}.
Define the probability measure $\lambda_1$ on $\mathrm{GL}(d,\R)$ by
$$\lambda_1 = \sum_{j \in \cA} p_j \delta_{ A_j}.$$
Let $A_{w_1}, \dots, A_{w_n}$ be an i.i.d. sequence of matrices distributed according to $\lambda_1$ from $\mathrm{GL}(d,\R)$ and for $w \in \cA^n$ define the product
$$A_w := A_{w_1} \dots A_{w_n}.$$
Because the operator norm of $A_j$ is less than $1$, the norm of $A_w$ will decrease with respect to $n$. Let $\tilde{\cA}$ be the set of infinite words $w_1w_2\cdots $, equipped with the measure $\lambda_1^{\otimes\N}$.
Using the stopping time notation, recall Section \ref{sec:stoppingtime}. For $t > 0$ and $z \in X=\mathbb{S}^{d-1}$ we define the function $n_t : X\times \tilde{\cA} \to \N$ by 
$$n_t^w(z) := \inf\{n \in \N : -\sigma((A_{w_1}\cdots A_{w_n})^\top, z)>t\},$$
where $w$ is an element in $\tilde{\cA}$.
Then $n_t(z)$ is a stopping time on trajectory space $\tilde{\cA}$. Write
$$\cW_t(z) = \{w_1\cdots w_{n^w_t(z)}: w \in \tilde{\cA} \},$$
which is a finite subset of $\cA^*$,
and define $\P_t = \P_t^z$ on $\cA^*$ by setting $\P_t = \sum_{w \in \cW_t(z)} p_w\delta_{w}$, where $\delta_w$ is the Dirac measure on $w$. Then in particular the self-affinity of $\mu = \sum_j p_j f_j \mu$ implies the following:

\begin{lemma}\label{lma:splitstoptime} For any $z \in  X$ and $t > 0$ we have
$$\mu = \E_{\P_t^z} f_w \mu = \sum_{w \in \cW_t(z)} p_w f_w \mu.$$
\end{lemma}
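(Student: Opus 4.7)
The plan is to observe that $\cW_t(z)$ is a finite antichain in the word tree $\cA^*$ whose cylinder sets partition $\tilde{\cA}$, and then to iterate the self-affinity relation $\mu = \sum_{j \in \cA} p_j f_j \mu$ up to a sufficiently deep common level. The second equality is immediate from the definition $\P_t^z = \sum_{w \in \cW_t(z)} p_w \delta_w$, so the content of the lemma lies in the identity $\mu = \sum_{w \in \cW_t(z)} p_w f_w \mu$.

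First I would produce a uniform upper bound on the stopping time $n_t^w(z)$. Set $\rho := \max_{j \in \cA} \|A_j\| < 1$. By submultiplicativity of the operator norm, $\|A_{w_1}\cdots A_{w_n}\| \leq \rho^n$, and since $\|g^\top\| = \|g\|$ for the euclidean operator norm, for any $v \in \R^+ z$ we have
\[ \sigma((A_{w_1}\cdots A_{w_n})^\top, z) = \log \frac{|(A_{w_1}\cdots A_{w_n})^\top v|}{|v|} \leq \log \rho^n = n \log \rho. \]
Hence $-\sigma((A_{w_1}\cdots A_{w_n})^\top, z) \geq n|\log \rho|$, and therefore $n_t^w(z) \leq N := \lceil t/|\log \rho| \rceil + 1$ for every $w \in \tilde{\cA}$ and every $z \in X$. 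In particular $\cW_t(z) \subset \bigcup_{k=1}^N \cA^k$ is finite. The definition of $n_t^w(z)$ as a first hitting time ensures that $\cW_t(z)$ is an antichain (no word in $\cW_t(z)$ is a proper prefix of another), and every length-$N$ word admits a unique prefix in $\cW_t(z)$; consequently the cylinders $\{[v] : v \in \cW_t(z)\}$ form a finite partition of $\tilde{\cA}$.

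The last step is to iterate the self-affinity relation. A straightforward induction on $n$ gives $\mu = \sum_{|u|=n} p_u f_u \mu$ for every $n \in \N$, using $f_{uj} = f_u \circ f_j$ and $p_{uj} = p_u p_j$. Applying this at level $n = N$ and regrouping by the unique prefix $v \in \cW_t(z)$ yields
\[ \mu = \sum_{v \in \cW_t(z)} \sum_{u \in \cA^{N-|v|}} p_v p_u (f_v \circ f_u) \mu = \sum_{v \in \cW_t(z)} p_v f_v \Big(\sum_{u \in \cA^{N-|v|}} p_u f_u \mu\Big) = \sum_{v \in \cW_t(z)} p_v f_v \mu, \]
where in the last equality we reapply the self-affinity identity at level $N-|v|$ to the inner sum. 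No serious obstacle arises; the only subtlety is the uniform bound $n_t^w(z) \leq N$, which allows one to truncate at a single common depth and avoid arguing directly with the random (non-constant) stopping time inside the iteration.
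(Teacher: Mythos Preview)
Your proof is correct. The paper does not give a detailed argument for this lemma; it simply asserts that ``the self-affinity of $\mu = \sum_j p_j f_j \mu$ implies'' the identity, leaving the verification to the reader. Your write-up is exactly the natural fleshing-out of that remark: bound the stopping time uniformly via $\|A_j\| \leq \rho < 1$, observe that $\cW_t(z)$ is a finite antichain partitioning $\tilde{\cA}$, iterate the self-affinity relation to the common depth $N$, and regroup. There is no alternative route here worth comparing.
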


For simplicity of the notation, we will sometimes abbreviate $\cW_t(z) $ to $\cW_t$ if there is no ambiguity. In fact, once the direction of $\xi$ is fixed, the stopping time is fixed.

\subsection{Reduction to matrix products}

Fix $\xi \in \R^d$, $z = \xi / |\xi| \in  X$ and $t > 0$ with the associated stopping time $n_t(z)$. We first reduce the Fourier transform to an expression involving products 
$$A_w = A_{w_1} \dots A_{w_n}$$
of the matrices by using the stopping time $n_t(z)$. As an application of Jensen's inequality we obtain:

\begin{lemma}\label{lma:cauchy}
For all $\xi \in \R^d$ and $t > 0$ we have
$$|\widehat{\mu}(\xi)|^2 \leq \iint  \sum_{w \in \cW_t(z)} p_w  e^{-2\pi i A_w^\top \xi\cdot (x-y)} \, d\mu(x) \, d\mu(y), $$
with $z=\xi/|\xi|$.
\end{lemma}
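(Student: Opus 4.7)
The plan is to combine the stopping-time decomposition of $\mu$ from Lemma \ref{lma:splitstoptime} with Jensen's inequality, exploiting the fact that translations contribute only unimodular phases that cancel in the modulus squared.

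First I would use Lemma \ref{lma:splitstoptime} to write
\[
\widehat{\mu}(\xi) \;=\; \sum_{w \in \cW_t(z)} p_w \,\widehat{f_w\mu}(\xi).
\]
For each $w$, a change of variables gives
\[
\widehat{f_w\mu}(\xi) \;=\; \int e^{-2\pi i \xi \cdot (A_w x + b_w)}\,d\mu(x) \;=\; e^{-2\pi i \xi \cdot b_w}\int e^{-2\pi i A_w^\top \xi \cdot x}\,d\mu(x).
\]
Since $\{p_w\}_{w \in \cW_t(z)}$ is a probability vector (this is the content of $\mu = \sum p_w f_w\mu$ applied to the constant function $1$), Jensen's inequality for $|\cdot|^2$ yields
\[
|\widehat{\mu}(\xi)|^2 \;\leq\; \sum_{w \in \cW_t(z)} p_w \bigl|\widehat{f_w\mu}(\xi)\bigr|^2.
\]

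Next I would expand the squared modulus. Writing $|\widehat{f_w\mu}(\xi)|^2 = \widehat{f_w\mu}(\xi)\,\overline{\widehat{f_w\mu}(\xi)}$, the phase $e^{-2\pi i \xi \cdot b_w}$ multiplies its conjugate and disappears, leaving
\[
\bigl|\widehat{f_w\mu}(\xi)\bigr|^2 \;=\; \iint e^{-2\pi i A_w^\top \xi \cdot (x-y)}\,d\mu(x)\,d\mu(y).
\]
Substituting this into the previous inequality and interchanging the (finite) sum with the integrals by Fubini gives exactly the claimed bound with $z = \xi/|\xi|$ (which is precisely the direction determining $\cW_t(z)$).

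There is no real obstacle here beyond careful bookkeeping: the key structural point is that affine maps split into a linear action on frequencies via the transpose and a translation that produces only a unimodular phase, so passing to $|\widehat{\mu}|^2$ erases all translation data and reduces the problem to the behavior of the random matrix products $A_w^\top$ acting on $\xi$. This is exactly the form needed so that Propositions \ref{prop:stop} and \ref{prop:stopexp} on the sphere $X = \Sh^{d-1}$ (through the direction $z = \xi/|\xi|$) can later be applied.
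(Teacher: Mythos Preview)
Your proof is correct and essentially identical to the paper's argument: both decompose $\widehat{\mu}(\xi)$ via Lemma \ref{lma:splitstoptime}, apply Jensen's inequality to the probability weights $p_w$, and expand the squared modulus so that the translation part cancels. The only cosmetic difference is that you isolate the unimodular phase $e^{-2\pi i \xi\cdot b_w}$ explicitly before squaring, whereas the paper computes $f_w(x)-f_w(y)=A_w(x-y)$ after squaring; these are the same observation.
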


\begin{proof} 
By Lemma \ref{lma:splitstoptime}, we obtain
$$\widehat{\mu}(\xi) = \sum_{w \in \cW_t} p_w \int e^{-2\pi i \xi \cdot f_w(x)} \, d\mu(x),$$
where the product weight
$$p_w := p_{w_1}\dots p_{w_n}.$$
Thus by Jensen's inequality or Cauchy-Schwarz's inequality, we have
$$|\widehat{\mu}(\xi)|^2 \leq \sum_{w \in \cW_t} p_w \Big|\int e^{-2\pi i \xi \cdot f_w(x)} \, d\mu(x)\Big|^2.$$
Opening up we see that
\begin{align*}
\sum_{w \in \cW_t} p_w \Big|\int e^{-2\pi i \xi\cdot f_w(x)} \, d\mu(x)\Big|^2 &= \iint  \sum_{w \in \cW_t} p_w  e^{-2\pi i \xi\cdot (f_w(x)-f_w(y))} \, d\mu(x) \, d\mu(y). 
\end{align*}
Here by definition we have that
$$f_w(x)-f_w(y) =  A_w (x-y),$$
so the proof is complete by $\xi\cdot A_w(x-y)=A_w^\top \xi\cdot (x-y)$.
\end{proof}

\subsection{Controlling nearby points}
Fix $s = s(\xi),t = t(\xi) > 0$ (which will be specified later) such that
$$|\xi|=se^t.$$ 
For $\delta=s^{-\eps}$ with $\eps=1/10$ denote the tube
$$A_\delta = \{(x,y) \in \R^d \times \R^d : |x-y| \leq \delta\}.$$
Here we use the upper Frostman property (Lemma \ref{lma:frostman}) to control this part.

\begin{lemma}\label{lma:expsum}
We have for some $\beta > 0$ and $C > 0$ that for all $\delta < r_0$ from Lemma \ref{lma:frostman} the following decay holds:
$$ \Big|\iint_{A_\delta}  \sum_{w \in \cW_t(z)} p_w  e^{-2\pi i \xi\cdot A_w (x-y)} \, d\mu(x) \, d\mu(y)\Big| \leq C\delta^\beta.$$
\end{lemma}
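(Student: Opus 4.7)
The plan is to use a purely trivial triangle inequality bound, exploiting only the $L^\infty$-size of the complex exponential and the upper Frostman regularity already established in Lemma \ref{lma:frostman}; no cancellation coming from the oscillating phase is needed for this particular lemma.

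The first step is to observe that the stopping time weights sum to one, namely $\sum_{w \in \cW_t(z)} p_w = 1$. This is immediate from the construction: the cylinder sets $[w] \subset \tilde{\cA}$ associated to $w \in \cW_t(z)$ partition the trajectory space $\tilde{\cA}$ (and the stopping time $n_t^w(z)$ is almost surely finite because $\|A_j\|<1$ forces the norms $\|S_n\|$ to shrink exponentially), so their $\lambda_1^{\otimes \N}$-masses $p_w$ sum to $1$. One can also obtain this by applying Lemma \ref{lma:splitstoptime} to the constant function $1$.

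Combining this with $|e^{-2\pi i \xi \cdot A_w(x-y)}| = 1$, the triangle inequality gives
\begin{align*}
\Bigl|\iint_{A_\delta} \sum_{w \in \cW_t(z)} p_w\, e^{-2\pi i \xi \cdot A_w(x-y)}\, d\mu(x)\,d\mu(y)\Bigr|
\leq (\mu \times \mu)(A_\delta).
\end{align*}
By Fubini,
\begin{align*}
(\mu \times \mu)(A_\delta) = \int_{F} \mu(B(y,\delta))\, d\mu(y),
\end{align*}
and since $\delta < r_0$, the upper Frostman bound from Lemma \ref{lma:frostman} yields $\mu(B(y,\delta)) \leq C_2 \delta^{s_2}$ for every $y \in F$. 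Integrating against the probability measure $\mu$ produces the claimed estimate with $\beta := s_2 > 0$ and $C := C_2$.

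There is essentially no obstacle here: this is a soft consequence of Frostman regularity whose only role is to discard the near-diagonal contribution. All the genuine oscillation work will be carried out on the complementary region $\{|x-y| > \delta\}$, where the phase $\xi \cdot A_w(x-y) = A_w^\top \xi \cdot (x-y)$ can be analysed using the stopping-time decomposition and the renewal theorems (Propositions \ref{prop:stop} and \ref{prop:stopexp}) in the direction $z = \xi/|\xi|$, with the parameters $s$ and $t$ balanced via $|\xi|=s e^t$ and $\delta = s^{-\eps}$.
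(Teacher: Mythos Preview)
Your proof is correct and follows essentially the same approach as the paper: bound the integrand trivially using $\sum_{w\in\cW_t(z)}p_w=1$ and $|e^{i\theta}|=1$, reduce to $(\mu\times\mu)(A_\delta)$, and then apply Fubini together with the upper Frostman bound from Lemma~\ref{lma:frostman} with $C=C_2$ and $\beta=s_2$. The only difference is expository: you spell out why the stopping-time weights sum to one, which the paper simply asserts.
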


\begin{proof}
Since $\sum_{w \in \cW_t} p_w = 1$, we have that
$$\Big|\iint_{A_\delta}  \sum_{w \in \cW_t} p_w  e^{-2\pi i \xi\cdot A_w (x-y)} \, d\mu(x) \, d\mu(y)\Big| \leq (\mu \times \mu) (A_\delta).$$
Thus by Fubini's theorem and Lemma \ref{lma:frostman} we have the following upper bound (for $C = C_2$ and $\beta = s_2$) due to $\delta < r_0$ in Lemma \ref{lma:frostman}:
$$ (\mu \times \mu) (A_\delta) = \int \mu(B(x,\delta)) \, d\mu(x) \leq C \delta^\beta
$$
so the claim follows.
\end{proof}

\subsection{Renewal operator appears}

We define another measure on $\mathrm{GL}(d,\R)$
$$\lambda = \sum_{j \in \cA} p_j \delta_{ A_j^\top},$$
which is the distribution of $A_j^\top$, the transpose of $A_j$. This is the measure we need in the renewal operator. The renewal operator adds elements on the left, while self-affinity adds elements on the right, this is the reason why we take the transpose.

In order to understand what happens in the case when $|x-y| \geq \delta$ for the sums
\begin{align}\label{eq:fourieraverages} \sum_{w \in \cW_t(z)} p_w  e^{-2\pi i A_w^\top \xi\cdot (x-y)},\end{align}
in the one-dimensional self-similar case \cite{LS}, we used the stopping time $n_t$ to write this sum as an expectation $\E_{\P_t}(g(S_t - t))$ for some suitable function $g$ depending on $\xi$. In here we need to take into account the direction $z = \xi/|\xi| \in  X$. Recall that in renewal theory (see the Section earlier on renewal theorem), we defined the following renewal operator from bounded Borel functions on $X\times\R$ to functions on $X$ by
\[\E_t f(x)=\sum_{n\geq 0}\int_{\sigma(g,x)\geq-t> \sigma(hg,x) } f(hgx,\sigma(hg,x)+t)\,d\lambda(h) \,d\lambda^{*n}(g),\]
where $X = \Sh^{d-1}$ and for an element $g \in \mathrm{GL}(d,\R)$, the map
$$\sigma(g,z) =\log\frac{|gv|}{|v|}$$ 
for $z$ in $X$ and $v\in \R^+ z = \{tz : t > 0\}$. Recall that we wrote $gz$ for a point in $X$ given by $gv/|gv|$ for $v\in\R^+z$, which gives an action of $G$ on $X$. 

 The following translation of languages allows us to analyse the averages \eqref{eq:fourieraverages} using the operators $\E_t f(x)$ as follows:

\begin{prop}\label{prop:approximation}
For all $x,y \in \R^d$, $z=\xi/|\xi|\in X$ and $t>0$ we have
$$\sum_{w \in \cW_t(z)} p_w  e^{-2\pi i A_w^\top \xi\cdot (x-y)} = \E_t g_{s_1,z_1}(z)$$
for a suitable
\begin{equation}\label{equ:gs}
g_{s_1,z_1}(z,u) = \exp(-2\pi i  s_1 \langle z,z_1\rangle e^u ),
\end{equation}
where $s_1=|\xi|e^{-t}|x-y|$ and $z_1=(x-y)/|x-y|$.
\end{prop}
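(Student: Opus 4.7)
The proof will be a direct term-by-term comparison of the two sides after unpacking the definition of $\E_t$ as a sum over finite words. The argument is essentially bookkeeping, but it requires care about two conventions: the transpose $A_j^\top$ appearing in the law $\lambda$ versus the matrices $A_j$ used for the IFS, and the $X = \Sh^{d-1}$-valued action $z\mapsto gz$ versus the norm cocycle $\sigma$.

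First I will expand $\E_t g_{s_1,z_1}(z)$ as a word sum. Since $\lambda = \sum_j p_j \delta_{A_j^\top}$, for each $n \geq 0$ the convolution $\lambda^{*n}$ is a weighted sum of Dirac masses at products
$$X_n \cdots X_1 = A_{w_n}^\top \cdots A_{w_1}^\top = (A_{w_1}\cdots A_{w_n})^\top = A_{\tilde w}^\top, \quad \tilde w = w_1\cdots w_n,$$
carrying weight $p_{\tilde w}$. With $h = A_{w_{n+1}}^\top$ supplied by the outer $\lambda$ and $w = w_1\cdots w_{n+1}$, the indicator $\sigma(g,z)\geq -t > \sigma(hg,z)$ in the definition of $\E_t$ translates into $-\sigma(A_{\tilde w}^\top, z)\leq t < -\sigma(A_w^\top, z)$, which by construction means that $n_t^{w'}(z) = n+1$ for any infinite word $w'$ extending $w$. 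Summing over $n\geq 0$ therefore reindexes the operator as
$$\E_t g_{s_1,z_1}(z) = \sum_{w\in\cW_t(z)} p_w \, g_{s_1,z_1}\bigl(A_w^\top z,\, \sigma(A_w^\top, z)+t\bigr).$$

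Next I will evaluate the summand explicitly. Letting $z\in\R^d$ denote both the point in $X$ and its unit-vector representative, we have $\sigma(A_w^\top, z) = \log|A_w^\top z|$ (the norm of the actual vector in $\R^d$), while $A_w^\top z$ regarded as a point of $X$ is the normalized vector $A_w^\top z/|A_w^\top z|$. Consequently the factor $e^{\sigma(A_w^\top, z)+t} = e^t |A_w^\top z|$ absorbs the normalization, yielding
$$s_1 \langle A_w^\top z,\, z_1\rangle \, e^{\sigma(A_w^\top, z)+t} = s_1 e^t \langle A_w^\top z,\, z_1\rangle_{\R^d}.$$
Substituting $z = \xi/|\xi|$, $z_1 = (x-y)/|x-y|$, and $s_1 e^t = |\xi|\,|x-y|$ collapses the right-hand side to $\langle A_w^\top \xi,\, x-y\rangle$, so that $g_{s_1,z_1}\bigl(A_w^\top z, \sigma(A_w^\top,z)+t\bigr) = e^{-2\pi i A_w^\top \xi\cdot(x-y)}$, as required.

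The only conceptual subtlety, and the piece one must get right to avoid a reversed product, is the left-versus-right convention: the renewal operator $\E_t$ appends new matrices on the left ($g\mapsto hg$), whereas the word $\tilde w$ gets its new letter appended on the right. This reversal is exactly what the transpose in $\lambda = \sum_j p_j\delta_{A_j^\top}$ compensates for, and it is also the reason the stopping time $n_t^w(z)$ was defined in Section~\ref{sec:stoppingtime} using $A_w^\top$ rather than $A_w$. Once this matching is in place the proof is a one-line combination of the two displays above.
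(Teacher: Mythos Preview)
Your proof is correct and follows the same approach as the paper's own proof: both compute $A_w^\top\xi\cdot(x-y)$ by separating the direction $A_w^\top z/|A_w^\top z|$ from the norm $|A_w^\top z|=e^{\sigma(A_w^\top,z)}$, then match the resulting expression with the definition of $\E_t$ applied to $g_{s_1,z_1}$. Your version is more explicit than the paper's about the word-sum expansion of $\E_t$ and the transpose/left-right bookkeeping, but the underlying computation is identical.
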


\begin{proof}[Proof of Proposition \ref{prop:approximation}] Let $v$ be the unit vector in $\R^+z$ and write $g=A_w^\top $. We obtain
	\begin{align*}
		A_w^\top \xi=|\xi|gv.
	\end{align*}
	Therefore by $s_1=|\xi|e^{-t}|x-y|$,
	\begin{equation}
	A_w^\top \xi\cdot(x-y)=|x-y|\langle gv/|gv|,z_1\rangle|\xi||gv|=s_1e^{t+\sigma(g,z)}\langle gz,z_1\rangle.
	\end{equation}
	Thus after taking exponentials, by the definition of $\E_t$ and $\cW_t(z)$ we obtain the desired identity.
\end{proof}

\subsection{Proof of Theorem \ref{thm:main}} Let us now complete the proof of the main Fourier decay result, Theorem \ref{thm:main}, assuming the renewal theorem Proposition \ref{prop:stop} earlier.
\begin{proof}[The end of the proof of Theorem \ref{thm:main}]
	We first make a cutoff to be able to compute the Lipschitz norm.
Let $g_{s_1}(z,u)=g_{s_1,z_1}(z,u)\rho(u)$, where $g_{s_1,z_1}$ is defined in \eqref{equ:gs} and $\rho$ is a smooth cutoff such that $\rho_{[-|\supp \lambda|,|\supp\lambda|]}=1$ and becomes $0$ outside of $[-|\supp\lambda|-1,|\supp\lambda|+1]$. Since the operator $\E_t$ only concerns the value of the function on $X\times [-|\supp\lambda|,|\supp\lambda|]$, we obtain
\[\E_tg_{s_1}=\E_tg_{s_1,z_1}. \]

By Proposition \ref{prop:stop}, we have 
\begin{align*}
 \E_tg_{s_1}(z)  =&\frac{1}{|\sigma_{\lambda}|}\int_{X}\int_G\int^{-\sigma(h,y)}_{0} g_{s_1}(hy,\sigma(h,y)+u)\,d u\,d\lambda(h)\,d\nu_z(y)\\
&+o_t\| g_{s_1}\|_\lf.
 \end{align*}
 The term $\|g_{s_1}\|_\lf$ is bounded by $O(s)$.
To obtain the decay from high oscillation, we will first take $y, h$ such that $\langle z,hy\rangle$ is not too small, which implies that the oscillation in $g_{s_1}$ is large. Then we integrate with respect to the Lebesgue measure $u$ to obtain the decay.

More precisely, let $D(z_1,s)$ be the subset of $G\times X$ such that for $(h,y)$ in this set we have $|\langle z_1,hy\rangle|<s^{-\eps}$. By stationarity and Lemma \ref{lma:guivarch} (Guivarc'h regularity), we have 
$$\lambda\times \nu_z(D(z_1,s))=\nu_z(x\in X|\, d(x,Y_{z_1})\leq s^{-\eps})\leq Cs^{-\eps\alpha},$$
where $Y_{z_1}=\{x\in X| \langle z_1,x\rangle=0 \}$.
For $(h,y)$ not in $D(z_1,s)$, the frequency of the oscillation function $g_{s_1}$ is large, that is 
\[s_1\langle z_1,hy \rangle=|\xi|e^{-t}|x-y|\langle z_1,hy\rangle \geq s\times\delta\times s^{-\eps}=s^{1-2\eps}\geq s^{1/2}. \]
Since $\lambda$ is compactly supported, the norm cocycle $\sigma(h,y)$ is bounded. Then by an elementary estimate \cite[Lemma 3.7]{Li1}, we have
\[\int^{-\sigma(h,y)}_{0} g_{s_1}(hy,u)\,d u=O(s^{-1/2}).  \]

Therefore, we obtain a upper bound of the main part
\begin{equation}\label{eq:mainpart}
 \Big|\int_{X}\int_G\int^{-\sigma(h,y)}_{0} g_{s_1}(hy,\sigma(h,y)+u)\,d u\,d\lambda(h)\,d\nu_z(y)\Big|=O(s^{-\eps\alpha})+O(s^{-1/2}).
\end{equation}
The error term is bounded by $o_tO(s)$. Recall that $|\xi|=se^t$ and $o_t$ tends to zero as $t$ tends to infinity. When $|\xi|$ tends to infinity, with a suitable chose of $t$ and $s=o_t^{-1/2}$, then $\E_tg_{s_1,z_1}$ tends to zero. Hence by Lemma \ref{lma:cauchy}, \ref{lma:expsum} and Proposition \ref{prop:approximation}, the proof is complete.
\end{proof}
\begin{remark}
	Here the group $\Gamma_\lambda$ equals $\Gamma^\top$, the transpose of the group generated by the linear part of the affine contraction. The transpose group $\Gamma^\top$ is also strongly irreducible and proximal, which enables us to apply renewal theorem (Proposition \ref{prop:stop}). 
	
	The Zariski closure of $\Gamma^\top$ is the transpose of the Zariski closure of $\Gamma$. The transpose does not change the splitness and the connectedness. In fact, these two algebraic groups are isometric. This ensures that we can use Proposition \ref{prop:stopexp} in the following proof.
\end{remark}
\subsection{Proof of Theorem \ref{thm:mainquant}} 
We keep the notation in the proof of Theorem \ref{thm:main}. We only need to give a better bound of the error term.
By Proposition \ref{prop:stopexp} we have
	\begin{align*}
	\E_t g_{s_1}(z) = \,\, &\frac{1}{|\sigma_{\lambda}|}\int_{X}\int_G\int^{-\sigma(h,y)}_{0} g_{s_1}(hy,\sigma(h,y)+u)\,d u\,d\lambda(h)\,d\nu_z(y)\\
	&+e^{-\eps_1 t}O(e^{4\eps_1|\supp g_{s_1}|}(\|g_{s_1}\|_{\mathrm{Lip}}+\|\partial_{uu}g_{s_1}\|_{\mathrm{Lip}})).
	\end{align*}
Let us now plug-in $t$ to the exponential error term
$$e^{-\eps_1 t}O(e^{4\eps_1|\supp g_{s_1}|}(\|g_{s_1}\|_{\mathrm{Lip}}+\|\partial_{uu}g_{s_1}\|_{\mathrm{Lip}}))$$
 from $s=|\xi|e^{-t}$, which gives
	$$e^{-\eps_1 t} = \Big(\frac{s}{|\xi|}\Big)^{\eps_1} .$$
	For the Lipschitz norm, by $s_1=s|x-y|$ we obtain
	\begin{equation}\label{equ:lip}
	\|g_{s_1}\|_{Lip}=O(s), \text{ and }\|\partial_{uu}g\|_{Lip}=O(s^3).
	\end{equation}
	Setting now $s = s(\xi)$ such that 
	$$s = s(\xi) := |\xi|^{\frac{\eps_1}{6+\eps_1}}$$ 
	gives that
	\begin{equation}\label{equ:epst}
	e^{-\eps_1 t}= \Big(\frac{s}{|\xi|}\Big)^{\eps_1} = |\xi|^{-\frac{6\eps_1}{6+\eps_1}}= s^{-6}.
	\end{equation}
	Therefore, combining \eqref{equ:lip} and \eqref{equ:epst}
\begin{equation}
	e^{-\eps_1 t}O(e^{4\eps_1|\supp g_{s_1}|}(\|g_{s_1}\|_{\mathrm{Lip}}+\|\partial_{uu}g_{s_1}\|_{\mathrm{Lip}}))\leq O(|\xi|^{-\frac{3\eps_1}{6+\eps_1}}).
\end{equation}
	
Moreover, by \eqref{eq:mainpart} we then have, as $|\xi| \to \infty$, that
$$\Big|\int_{X^2}\int_G\int^{-\sigma(h,y)}_{0} g(hy,\sigma(h,y)+u)\,d u\,d\lambda(h)\,d\nu_z(y)\Big|=O(s^{-\eps_1\alpha})+O(s^{-1/2}) = O(|\xi|^{-\alpha_1}),$$
with $\alpha_1>0$.
Thus the decay rate of $|\widehat{\mu}(\xi)|$ as $|\xi| \to \infty$ is polynomial. \qed
\begin{remark}\label{rem:s2pf}
	When the self-affine set $F$ satisfies strong separation condition, then the $\beta=s_2$ in Lemma \ref{lma:expsum} is independent of the translation part. Hence the decay rate in the proof above is also independent of the translation part.
\end{remark}

\section{Proof of the renewal theorems}
\label{sec:renewal}
We start to prove the renewal theorems for random walks on the sphere $\Sh^{d-1}$, $d\geq 2$. Recall $X:=\Sh^{d-1}$ and $V:=\R^d$ equipped with a norm.
Recall that our random walk given by the measure $\lambda$ on $\mathrm{GL}(V)$, and the group $\Gamma_\lambda$ acts proximally and strongly irreducibly on $V$.
 We relax the assumption on the support to finite exponential moment, that is there exists $\eps>0$ such that
\[\int \|g\|^\eps\,d\lambda(g)< +\infty. \]
We only suppose the negativeness of the first Lyapunov exponent, $\sigma_\lambda<0$, instead of $\|g_j\|<1$.

 The first step, we will follow \cite{Boyer} to obtain the classic renewal theorem with an error term depending on some operator. The section 4 of \cite{Boyer} is written for general groups, which also works in our cases. What we borrow from the work of Boyer \cite{Boyer} is some classical estimates and functional analysis. Similar results and computations can also be find in \cite{Babillot}, \cite{Sarig} and \cite{Avila}.
 Please see the discussion after Proposition \ref{prop:1-pz}, where we give precise citations from \cite{Boyer}.
 Next, with the additional assumption that the Zariski closure is $\R$-split, we use the spectral gap in \cite{Li2} to obtain an exponentially error term in our renewal theorem. At last, we follow \cite{Li1} to obtain the renewal theorem for residue process with an error term. 

For the renewal theorem on projective spaces please see \cite{Li1}. Here we deal with renewal theorem on spheres, for more details please see \cite{Boyer} and \cite{GLP}. 
\subsection{Renewal theorem for random walks on spheres}
We follow \cite[Section 4]{Boyer} in this part.
Recall the measure $\nu_x$ defined in Definition \ref{defi:nux}.
For $x$ in $ X$ and a continuous function $f$ on $ X$, we let 
\[ N_0f(x)=\int f\,d\nu_x. \]
For a continuous function $f$ on $ X\times\R$, we define 
\[N_0f(x,t):=N_0f_t(x), \]
where $f_t(x)$ is seen as a function on $ X$.

Recall that for $g$ in $\mathrm{GL}(V)$ and $x=v$ in $ X$, we write $\sigma(g,x)=\log\frac{|gv|}{|v|}$.
Let $z$ be  a complex number and let $P_z$ be the complex transfer operator: For $\Re z$ small enough and $f$ a continuous function, $x$ in $ X$
\[P_zf(x)=\int e^{z\sigma(g,x)}f(gx)\,d\lambda(g). \]
Then the operator $N_0$ projects the function on $ X$ to the subspace of $P_0$-invariant functions. (See \cite[Lemma 2.13]{Boyer})

We define the renewal operator. For a bounded positive Borel function $f$ on $ X\times \R$, let
\[Rf(x,t)=\sum_{n\geq 0}\int f(gx,t+\sigma(g,x))\,d\lambda^{*n}(g). \]
Let $f$ be a positive bounded continuous function in $L^1( X\times\R,\nu\otimes Leb)$. We define the operator $\Pi_0$ by
\[\Pi_0f(x,t)=\int_{-\infty}^t N_0f(x,u)\,d u. \] 
Let $C^\gamma( X)$ be the space of $\gamma$-H\"older continuous functions on $ X$ and the norm is given by 
\[\|f\|_{C^\gamma}:=\|f\|_\infty+c_\gamma(f), \text{ where }c_\gamma(f):=\sup_{x\neq y}\frac{|f(x)-f(y)|}{d(x,y)^\gamma}. \]
Define the $L^\infty C^\gamma$ norm on $ X\times\R$ by 
\[\|f\|_{L^\infty C^\gamma}:=\sup_{\xi\in\R}\|f(x,\xi)\|_{C^\gamma}, \]
which is the supremum of the H\"older norm on $f(\cdot,\xi)$. Define another Sobolev norm
$$\|f\|_{W^{1,\infty}C^\gamma}:=\|f\|_{L^\infty C^\gamma}+\|\partial_\xi f\|_{L^\infty C^\gamma}.$$
Write the Fourier transform $\hat{f}(x,\xi)=\int e^{i u\xi}f(x,u)\,d u$ for $f$ in $L^1(X\times \R,\nu\otimes Leb)$.
	\begin{prop}\label{prop:1-pz}
		Let $\lambda$ be a Borel probability measure on $\mathrm{GL}(V)$ with an exponential moment, such that the group $\Gamma_\lambda$ acts proximally and strongly irreducibly on $V$. Suppose that the first Lyapunov exponent $\sigma_\lambda$ is negative. Then 
		\begin{itemize}
			\item[(i)] There exists $\gamma>0$ such that $P_z$ preserves the H\"older space $C^\gamma( X)$ when $\Re z$ small.  There exists an analytic operator $U(z)$ on $C^\gamma( X)$, defined on a neighbourhood of the imaginary line, such that for $z$ in this domain
			\[\frac{1}{Id-P_z}=\frac{N_0}{\sigma_{\lambda} z}+U(z). \]
			\item[(ii)]  Suppose that $f$ is in $L^1( X\times\R,\nu\otimes Leb)\cap W^{1,\infty}C^\gamma( X\times\R)$ and the projection of $\supp\hat f$ onto $\R$ is compact. For all $x$ in $ X$ and real number $t>0$,
			\[Rf(x,t)=\frac{1}{|\sigma_\lambda|}\Pi_0f(x,t)+\int e^{it\xi}U(i\xi)\hat{f}(x,\xi)\,d\xi. \]
		\end{itemize}	
	\end{prop}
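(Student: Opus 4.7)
The plan is to follow the classical Guivarc'h--Le Page spectral approach to renewal theorems, as pursued in \cite{Boyer,GLP} in exactly this sphere setting, by analysing the family of complex transfer operators $P_z$ on a H\"older space.

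\textbf{Part (i).} Quasi-compactness of $P_0$ on $C^\gamma(X)$ for a suitably small $\gamma>0$ is the classical Le Page theorem under proximality and strong irreducibility, and passes from $\P(V)$ to the sphere via the antipodal double cover; the top eigenspace at eigenvalue $1$ is spanned by the $P_0$-invariant continuous functions (the $p_j$), with spectral projector exactly $N_0$, and the rest of the spectrum lies in a disk of radius strictly less than one. Since $z\mapsto P_z$ is an analytic family of bounded operators on $C^\gamma(X)$ after possibly shrinking $\gamma$ (using the exponential moment assumption), analytic perturbation theory supplies an analytic leading eigenvalue $\lambda(z)$ with analytic eigenprojector $N_z$ and $\lambda(0)=1$. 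Differentiating the eigenvalue relation $P_zh_z=\lambda(z)h_z$ at $z=0$ and pairing with $\nu_x$ gives $\lambda'(0)=\int\sigma(g,y)\,d\lambda(g)\,d\nu_x(y)=\sigma_\lambda$, whence $1-\lambda(z)=-\sigma_\lambda z+O(z^2)$ near $0$, and the spectral decomposition
\[\frac{1}{Id-P_z}=\frac{N_z}{1-\lambda(z)}+\tilde U(z),\]
with $\tilde U(z)$ analytic, produces the announced pole $N_0/(\sigma_\lambda z)$ plus an analytic remainder, which is $U(z)$. To extend $U(z)$ from a disk about $0$ to a neighbourhood of the whole imaginary line I would invoke non-arithmeticity of the cocycle $\sigma(\cdot,\cdot)$: under proximality and strong irreducibility the classical Guivarc'h--Le Page argument shows that for every $\tau\in\R\setminus\{0\}$ the operator $Id-P_{i\tau}$ is invertible on $C^\gamma(X)$, and a compactness-on-bounded-strips argument combined with quasi-compactness upgrades this to the announced holomorphic extension.

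\textbf{Part (ii).} This step is Fourier inversion in the $t$-variable using part (i). With the paper's convention $\hat f(x,\xi)=\int e^{iu\xi}f(x,u)\,du$, the inversion formula and the definition of $R$ yield, up to the normalising constants absorbed into $\hat f$,
\[Rf(x,t)=\int e^{-it\xi}\sum_{n\ge 0}(P_{-i\xi})^n\hat f(\cdot,\xi)(x)\,d\xi,\]
where the interchange of summation and integration is justified by the compactness of $\pr_\R(\supp\hat f)$, the regularity $f\in W^{1,\infty}C^\gamma$, and the uniform spectral gap from (i). Replacing the inner sum by $(Id-P_{-i\xi})^{-1}\hat f(\cdot,\xi)(x)$ and substituting the decomposition from part (i) at $z=-i\xi$ splits $Rf$ into an analytic-part contribution that becomes $\int e^{it\xi}U(i\xi)\hat f(x,\xi)\,d\xi$ after the change of variable $\xi\mapsto-\xi$, and a singular piece
\[\frac{1}{\sigma_\lambda}\int\frac{e^{-it\xi}}{-i\xi}N_0\hat f(x,\xi)\,d\xi,\]
which by the distributional Fourier identity relating $1/(-i\xi)$ to the Heaviside function, combined with the assumption $f\in L^1(X\times\R,\nu\otimes Leb)$ (so that the boundary term at $-\infty$ vanishes), equals $\tfrac{1}{|\sigma_\lambda|}\int_{-\infty}^tN_0f(x,u)\,du=\tfrac{1}{|\sigma_\lambda|}\Pi_0f(x,t)$.

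The hard part will be the non-arithmeticity step at the end of (i): ruling out H\"older solutions $h$ to $e^{i\tau\sigma(g,x)}h(gx)=e^{i\theta}h(x)$ $\lambda$-almost surely for every real $\tau\neq 0$ and every constant $\theta$. This is precisely where both the proximality and the strong irreducibility of $\Gamma_\lambda$ enter in an essential way, and is the obstruction that fails, for instance, for one-dimensional lattice-valued walks. Moving from the projective space to the sphere adds only the minor complication of the possibly non-unique stationary measure, already absorbed into the functions $p_1,p_2$ and the decomposition $\nu_x=p_1(x)\nu_1+p_2(x)\nu_2$ introduced just before the statement.
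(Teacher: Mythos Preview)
Your proposal is correct and follows the same spectral/perturbative approach as the paper, which does not give a self-contained proof but instead defers to Boyer \cite{Boyer} (specifically Lemma~4.9 for part~(i) and Proposition~4.14, or equivalently \cite[Proposition~4.5]{Li1}, for part~(ii)), noting that the contracting hypothesis in Boyer's Theorem~4.1 is verified via \cite{Bougerol-Lacroix} and that the extension of $U(z)$ across the whole imaginary axis comes from non-arithmeticity of the cocycle under the proximal and strongly irreducible assumption (\cite[Proposition~2.5]{GLP} or \cite[Theorem~7.4]{benoistquint}). You have correctly identified the non-arithmeticity step as the essential ingredient, and your sketch of the analytic-perturbation and Fourier-inversion arguments is exactly what those references do.
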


We want to explain how to establish this renewal type theorem by using \cite[Theorem 4.1]{Boyer}. 
\begin{itemize}
	\item[(1)] Since $\Gamma_\lambda$ is strongly irreducible and proximal and $\lambda$ has finite exponential moment, by \cite[Proposition 2.3, Chapter V]{Bougerol-Lacroix}, the action of the group $\mathrm{GL}(V)$ on $C^\gamma(\P V)$ is $(\lambda,\gamma)$-contracting. This verifies the contracting condition in \cite[Theorem 4.1]{Boyer}.
	
	\item[(2)] In \cite[Theorem 4.1]{Boyer}, Boyer made another assumption on the norm of the operator $\|(Id-P_{ib})^{-1}\|_{C^\gamma( X)}$. But this condition is only used to get a larger definition region of the analytic operator $U(z)$, which is then used to obtain the renewal theorem for some regular functions. We will prove a stronger condition in the next subsection.
	
	\item[(3)] 	In our situation, we only need the existence of $U(z)$ in a neighbourhood of the imaginary line. This is due to the fact that $P_{ib}$ for $b\in\R$ has $1$ as eigenvalue only at $b=0$, which is a consequence of the non-arithmeticity of the cocycle $\sigma(g,x)$. Under our proximal and strongly irreducible assumption, the non-arithmeticity can be found in \cite[Proposition 2.5]{GLP} or \cite[Theorem 7.4]{benoistquint}.
	
	\item[(4)] 	In the one dimensional case \cite{LS}, we need the extra assumption that the contracting ratio is non-arithmetic. But in higher dimension, the non-arithmeticity is automatically given by the proximal and strongly irreducible condition.
	
	\item[(5)] 	Hence \cite[Lemma 4.9]{Boyer} gives (i). And the renewal theorem is for the function whose Fourier transform has a compact support. By the same computation in \cite[Proposition 4.14]{Boyer} or \cite[Proposition 4.5]{Li1} (the same as for the renewal theorem on $\R$), we can establish a renewal theorem with an error term. 
\end{itemize}
\begin{remark}
	In our renewal theorem, the main term is given by the integration on $(-\infty,t)$, because the Lyapunov constant $\sigma_\lambda$ is negative and the the cocycle $\sigma(g,x)$, with $g$ following the law of $\lambda^{*n}$, tends to $-\infty$ as $n$ tends to $+\infty$.
\end{remark}

\subsection{Exponential error term}
	Now, we want to explain how to obtain the exponential error term in renewal theorem under the additional assumption that the Zariski closure of $\Gamma_\lambda$ is connected and $\R$-split. Recall that a Borel probability measure $\lambda$ on an algebraic group $G$ is called Zariski dense if the group $\Gamma_\lambda$ is Zariski dense in $G$. The key input is the following uniform spectral gap for complex transfer operator
	\begin{lemma}[Spectral gap]\label{prop:spectralgap}
		Let $\lambda$ be a Zariski dense Borel probability measure on a connected reductive group $G$ defined and split over $\R$ with a finite exponential moment. For $\gamma>0$ small enough, there exist $\rho<1, C>0$ such that for all $a$, $b$ in $\R$ with $|b|$ large enough, $|a|$ small enough and $f$ in $C^{\gamma}( X)$, $n$ in $\N$ we have
		\begin{equation*}
		\|P^{n}_{a+ib}f\|_{\gamma}\leq C|b|^{2\gamma}\rho^n\|f\|_{\gamma}.
		\end{equation*}
	\end{lemma}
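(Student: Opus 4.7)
The plan is to reduce the spectral gap on the sphere $X = \Sh^{d-1}$ to the analogous spectral gap on the projective space $\P(V)$ established in \cite{Li2}, via the two-to-one cover $\pi \colon X \to \P(V)$. The key observation is that the cocycle $\sigma$ is invariant under the antipodal map on $X$, while the sphere action $g\cdot x = gv/|gv|$ anti-commutes with it, i.e.\ $g\cdot(-x) = -(g\cdot x)$. Consequently $P_z$ commutes with the involution $f(x)\mapsto f(-x)$ and preserves the parity decomposition
$$C^\gamma(X) = C^\gamma_+(X) \oplus C^\gamma_-(X)$$
into even and odd H\"older functions, so it suffices to prove the bound on each summand separately.

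On the even summand $C^\gamma_+(X)$, pull-back along $\pi$ is an isometry onto $C^\gamma(\P(V))$ and intertwines the two complex transfer operators. The desired estimate then follows immediately from the spectral gap for the complex transfer operator on $\P(V)$ proved in \cite{Li2} under the same Zariski density hypothesis. On the odd summand $C^\gamma_-(X)$ one cannot descend to $\P(V)$, but the argument of \cite{Li2} can be re-run. Its key analytic inputs are insensitive to passing to the oriented cover: the Guivarc'h regularity of the stationary measure (Lemma \ref{lma:guivarch}), the non-arithmeticity of $\sigma$ (automatic under proximality and strong irreducibility), and the discretized sum--product estimates of Bourgain and He--de Saxc\'e (cf.\ \cite{hesaxce, Li3}), which are statements about scalars on $\R$ and are unaffected by the parity.

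The only genuinely new point is the real-part ($b=0$) spectral gap $\|P_0^n|_{C^\gamma_-(X)}\|_\gamma \leq C_0\rho_0^n$ for some $\rho_0 < 1$. This follows from uniqueness of the $\lambda$-stationary measure on $X$: under Zariski density in a connected $\R$-split reductive group acting irreducibly on $V$, $\Gamma_\lambda$ cannot preserve a proper convex cone in $V$, so by \cite[Proposition 2.14]{GLP} the stationary measure on $X$ is unique and symmetric under the antipodal map. Hence $1$ is a simple eigenvalue of $P_0$ whose eigenspace consists of (even) constants, and the spectral radius of $P_0$ on $C^\gamma_-(X)$ is strictly less than $1$. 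Plugging this gap together with the high-frequency cancellation estimates into the Dolgopyat--Bourgain iteration of \cite{Li2} yields the claimed polynomial-in-$|b|$, exponential-in-$n$ bound. The main technical obstacle is verifying that the large-$|b|$ cancellation step of \cite{Li2} goes through on the odd summand; this reduces to checking that the non-concentration estimates for $\sigma(g,\cdot)$ used in the sum--product step are naturally statements on $\P(V)$ and therefore apply identically to both parity sectors.
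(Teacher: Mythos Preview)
Your parity decomposition $C^\gamma(X)=C^\gamma_+(X)\oplus C^\gamma_-(X)$ is exactly the right starting point, and your treatment of the even summand by descending to $\P(V)$ and quoting \cite{Li2} is correct.  The genuine gap is in your odd-summand argument: the claim that ``under Zariski density in a connected $\R$-split reductive group acting irreducibly on $V$, $\Gamma_\lambda$ cannot preserve a proper convex cone'' is false.  Take $G=\mathrm{GL}(d,\R)$ and let $\lambda$ be supported on generic matrices with strictly positive entries and varying determinants; such a $\Gamma_\lambda$ is Zariski dense in $\mathrm{GL}(d,\R)$ yet preserves the positive orthant.  In that case there are two ergodic stationary measures $\nu_1,\nu_2$ on $X$ exchanged by the antipodal map, and the odd function $p_1-p_2$ is $P_0$-invariant.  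So $P_0|_{C^\gamma_-(X)}$ has $1$ in its spectrum and the asserted contraction $\|P_0^n|_{C^\gamma_-}\|_\gamma\le C_0\rho_0^n$ fails.

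The paper avoids this obstruction altogether.  Rather than proving a spectral gap on each parity sector separately, it observes that for $f$ of \emph{either} fixed parity the bilinear integrand
\[
w\;\longmapsto\; e^{z\sigma(g,w)+\bar z\sigma(h,w)}\,f(gw)\,\overline{f(hw)}
\]
is automatically even (the product of two odd or two even functions is even, and $\sigma$ is even).  Hence the quantity $A_{g,h}$ that appears after Cauchy--Schwarz in the $L^2$ estimate descends to an integral against the unique Furstenberg measure on $\P(V)$, and the Fourier-decay/sum--product machinery of \cite{Li2} applies verbatim to both sectors simultaneously.  The a priori estimates are then stated relative to the full projector $N_0$ onto $P_0$-invariants (which may be two-dimensional), not relative to constants.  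Your final sentence gestures at this reduction but does not identify the product-is-even mechanism, and the incorrect uniqueness step would still block the argument as written.  You also omit the second reduction carried out in the paper, from the reductive group $G$ to its semisimple quotient $G/C$ (needed so that the Fourier-decay results of \cite{Li2} on the flag variety apply), though that step is more routine.
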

	This spectral gap is established in \cite{Li2} Theorem 4.19 for semisimple groups and transfer operators on the projective spaces. We will indicate the modification needed to prove this version later.
	
	The spectral gap of $P_z$ implies that the analytic operator $U(z)$ in Proposition \ref{prop:1-pz} has an analytic continuation to a strip of the imaginary line and the operator norm of $U(z)$ is bounded by a polynomial of the imaginary part.
	We can obtain an exponential error term in renewal theorem by the same approach as in \cite{Li2} Section 4.3. The strengthened version of renewal theorem is as follows
	\begin{prop}\label{prop:renerr}
		Let $G$ a connected reductive group defined and split over $\R$, which acts irreducibly on $V$.
			Let $\lambda$ be a Zariski dense Borel probability measure on $G$ with a finite exponential moment and $\sigma_\lambda<0$. For every $\gamma>0$ small enough, there exists $\eps>0$ such that for $f$ in $C_c^\infty( X\times\R)$, all $x$ in $ X$ and $t$, we have
			\[Rf(x,t)=\frac{1}{|\sigma_\lambda|}\Pi_0f(x,t)+e^{-\eps|t|}O(e^{\eps|\supp f|}(\|\partial_{tt}f\|_{L^1_\R C^\gamma_{ X}}+\|f\|_{L^1_\R C^\gamma_{ X}})), \]
			where $|\supp f|=\sup\{|u|,\ (x,u)\in\supp f \}$ and 
			$$\|f\|_{L^1_\R C^\gamma_{ X}}=\int \|f(x,t)\|_{C^\gamma({ X})}\,d t.$$
	\end{prop}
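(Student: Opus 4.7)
The plan is to deduce Proposition~\ref{prop:renerr} from Proposition~\ref{prop:1-pz}(ii) by promoting the analytic operator $U(z)$, currently defined only in a neighbourhood of the imaginary line, to an analytic family on a full strip $\{|\Re z|<\epsilon_0\}$ with polynomial growth of its $C^\gamma$-operator norm in $|\Im z|$. The input for this extension is the spectral gap of Lemma~\ref{prop:spectralgap}: once $|\Im z|$ is large, $P_z^n$ contracts geometrically with constant $C|\Im z|^{2\gamma}\rho^n$, so $(I-P_z)^{-1}$ is defined and bounded by $C'|\Im z|^{2\gamma}/(1-\rho)$; for bounded $|\Im z|$ one uses the non-arithmeticity (the eigenvalue $1$ of $P_{ib}$ occurs only at $b=0$) together with analytic perturbation to continue $U(z)$ across a strip. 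Putting the two regimes together yields a neighbourhood $\{|\Re z|<\epsilon_0\}$ on which $U(z)$ is analytic with $\|U(z)\|_{C^\gamma}\lesssim(1+|\Im z|)^{2\gamma}$.

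Starting from the identity
\[
Rf(x,t)-\frac{1}{|\sigma_\lambda|}\Pi_0 f(x,t)=\int_\R e^{it\xi}\,U(i\xi)\hat f(x,\xi)\,d\xi,
\]
the next step is a contour shift. For $t>0$ the phase $e^{it\xi}$ decays in the upper complex $\xi$-plane, so I would deform the contour from $\R$ to $\R+i\eta$ with $0<\eta<\epsilon_0$; for $t<0$, push it to $\R-i\eta$. The analyticity of $\xi\mapsto U(i\xi)\hat f(x,\xi)$ on the strip (valid because $f\in C_c^\infty$, so $\hat f(x,\cdot)$ is entire in $\xi$) makes this legal and, since $f$ is compactly supported in $t$ and $x\mapsto\hat f(x,\xi)$ stays in $C^\gamma$, all the limiting boundary terms at infinity vanish. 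After the shift, the modulus of $e^{it\xi}$ along $\R\pm i\eta$ gives the prefactor $e^{-\eta|t|}$, which will be the source of the exponential decay $e^{-\epsilon|t|}$.

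To control the integrand along the shifted contour I would use two ingredients. First, for $\xi=a\pm i\eta$, the Paley--Wiener bound $|\hat f(x,\xi)|\le e^{\eta|\supp f|}\|f(x,\cdot)\|_{L^1}$ and, for the derivative version, $|\hat f(x,\xi)|\le(1+a^2)^{-1}e^{\eta|\supp f|}(\|f(x,\cdot)\|_{L^1}+\|\partial_{tt}f(x,\cdot)\|_{L^1})$ by integrating by parts twice in the $t$-variable; this contributes the factor $e^{\epsilon|\supp f|}$ and, crucially, the integrable factor $(1+a^2)^{-1}$. Second, the operator bound $\|U(i\xi)\|_{C^\gamma}\lesssim(1+|a|)^{2\gamma}$ obtained from the spectral gap. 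Combining,
\[
\Bigl\|\int_{\R\pm i\eta}e^{it\xi}U(i\xi)\hat f(x,\xi)\,d\xi\Bigr\|_{C^\gamma_x}
\lesssim e^{-\eta|t|}e^{\eta|\supp f|}\int_\R\frac{(1+|a|)^{2\gamma}}{1+a^2}\,da\cdot(\|\partial_{tt}f\|_{L^1_\R C^\gamma_X}+\|f\|_{L^1_\R C^\gamma_X}),
\]
where the $a$-integral converges for $\gamma<1/2$. Choosing $\epsilon=\eta$ gives the proposition.

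The main obstacle is the analytic continuation step: getting a quantitative bound on $\|U(z)\|_{C^\gamma}$ throughout a genuine strip, not just at the imaginary axis, requires combining the high-frequency spectral gap (Lemma~\ref{prop:spectralgap}, which only applies for $|\Im z|$ large) with a separate low-frequency analysis based on the perturbation theory of the simple eigenvalue $1$ of $P_0$ (using that $\sigma_\lambda<0$ and non-arithmeticity keeps $1$ off the spectrum of $P_{ib}$ for $b\neq 0$ close to $0$). Once these two regions are glued, the remaining contour-shift and integration-by-parts argument is routine; the overall argument parallels \cite[Section~4.3]{Li2}, which handles the projective-space analogue, and the only adaptations are replacing $\P V$ by $X=\Sh^{d-1}$ and checking that the proofs from \cite{Boyer} and \cite{GLP} provide the sphere versions of non-arithmeticity and the Guivarc'h-type regularity used in passing from $P_z$ on $C^\gamma(X)$ to the Fourier representation.
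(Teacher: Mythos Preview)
Your proposal is correct and follows essentially the same route the paper takes: the paper states that the spectral gap of Lemma~\ref{prop:spectralgap} gives the analytic continuation of $U(z)$ to a strip with polynomial operator-norm growth, and then refers to \cite[Section~4.3]{Li2} for the contour-shift argument producing the exponential error, which is exactly the scheme you outline (high-frequency via spectral gap, low-frequency via perturbation around $b=0$ using non-arithmeticity, then shift $\xi\mapsto\xi\pm i\eta$ and integrate by parts twice to gain the $(1+a^2)^{-1}$ factor). One small point to keep in mind when writing it up: Proposition~\ref{prop:1-pz}(ii) is stated only for test functions whose Fourier transform has compact $\R$-support, so before shifting the contour you need a one-line density/approximation step (or a direct derivation) to justify the Fourier identity for $f\in C_c^\infty(X\times\R)$; once the polynomial bound on $\|U(i\xi)\|_{C^\gamma}$ is in hand this is routine.
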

	
	It remains to prove the spectral gap (Lemma \ref{prop:spectralgap} )
	
\begin{proof}[Proof of spectral gap (Lemma \ref{prop:spectralgap})]
	In \cite{Li2}, we deduce Theorem 4.19 (analogue of Lemma \ref{prop:spectralgap}) from a priori estimate (Proposition 4.22, analogue of Lemma \ref{prop:spectral real}) and a $L^2$ estimate (Proposition 4.23, analogue of Lemma \ref{prop:l2}). We need to establish the analogue of Proposition 4.22 in \cite{Li2}.
	\begin{lemma}\label{prop:spectral real}
		With the same assumption as in Lemma \ref{prop:spectralgap}, for every $\gamma>0$ small enough, there exist $C>0$ and $0<\rho<1$ such that for all $f$ in $C^\gamma({ X})$, $|a|$ small enough and $n\in\N$
		\begin{align}
		&\label{equ:expmom}
		\|P^n_zf\|_\infty\leq C^{|a|n}\|f\|_\infty,\\
		&\label{equ:infpnf}\|P_0^nf\|_{\infty}\leq\left\|N_0f\right\|_\infty+C\rho^n\|f\|_{\gamma},\\
		&\label{equ:gampnf}
		c_\gamma(P^n_zf)\leq C(C^{|a|n}(1+|b|^{\gamma})\|f\|_{\infty}+\rho^nc_\gamma(f)).
		\end{align}
	\end{lemma}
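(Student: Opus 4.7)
The plan is to follow the template of \cite[Proposition 4.22]{Li2}, where the analogous estimates were established on the projective space $\P(V)$, and to transfer the argument to the sphere $X = \Sh^{d-1}$. The guiding principle is that all three bounds reduce to the classical $(\lambda,\gamma)$-contraction of the $\Gamma_\lambda$-action on $C^\gamma(X)$ (see \cite[Chapter V, Proposition 2.3]{Bougerol-Lacroix}) combined with exponential moment control of the norm cocycle $\sigma(g,x)$. Crucially, both the local contraction estimate $d(gx, gy) \leq \|g\|\,\|g^{-1}\|\,d(x,y)$ and the cocycle $\sigma(g,x)$ lift unchanged from $\P(V)$ to its double cover $X$, so most of the projective argument applies verbatim.

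For \eqref{equ:expmom} I bound $|P_z^n f(x)|$ by $\|f\|_\infty \int e^{a\sigma(g,x)}\,d\lambda^{*n}(g)$; the finite exponential moment hypothesis and sub-multiplicativity of the cocycle yield $\int e^{a\sigma(g,x)}\,d\lambda^{*n}(g) \leq C^{|a|n}$ uniformly in $x$ for $|a|$ small. For \eqref{equ:infpnf} I use the $(\lambda,\gamma)$-contraction to obtain $\|P_0^n f - N_0 f\|_\infty \leq C\rho^n\|f\|_\gamma$. When $\Gamma_\lambda$ preserves a proper convex cone and there are two $\lambda$-ergodic stationary measures $\nu_1, \nu_2$, I decompose $f = p_1 f + p_2 f$ using the $P_0$-harmonic weights of \cite[Lemma 2.13]{Boyer}, apply the contraction on each ergodic component, and reassemble the result into $N_0 f(x) = p_1(x)\int f\,d\nu_1 + p_2(x)\int f\,d\nu_2$.

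Estimate \eqref{equ:gampnf} is the main technical point. I use the Guivarc'h--Le Page telescoping decomposition
\begin{equation*}
P_z^n f(x) - P_z^n f(y) = \int (e^{z\sigma(g,x)} - e^{z\sigma(g,y)})\, f(gx)\, d\lambda^{*n}(g) + \int e^{z\sigma(g,y)}(f(gx) - f(gy))\, d\lambda^{*n}(g).
\end{equation*}
The second integral contributes $\rho^n c_\gamma(f) d(x,y)^\gamma$ (up to a $C^{|a|n}$ prefactor) via contraction and H\"older regularity of $f$. For the first integral I interpolate between the trivial $L^\infty$ bound and the Lipschitz estimate $|e^{z\sigma(g,x)} - e^{z\sigma(g,y)}| \leq C|z|\,\|g\|\,\|g^{-1}\|\,d(x,y)\,e^{|a|\max(|\sigma(g,x)|,|\sigma(g,y)|)}$; after integrating the $\gamma$-power interpolant against $\lambda^{*n}$ this produces the explicit factor $(1+|b|^\gamma)$ attached to $\|f\|_\infty d(x,y)^\gamma$.

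The main obstacle is parameter uniformity: I must choose $\gamma > 0$ small enough that $\int (\|g\|\,\|g^{-1}\|)^\gamma\,d\lambda(g)$ is finite and close to $1$, so that the contraction constant $\rho < 1$ and the exponential moment constant $C$ can be taken uniformly in $a$ (for $|a|$ small) while confining all $b$-dependence to the explicit $|b|^\gamma$ term. This tuning mirrors exactly the parameter choice in the projective case; the only genuinely new feature on the sphere is the possible non-uniqueness of the stationary measure, which is resolved via the $p_1, p_2$-partition.
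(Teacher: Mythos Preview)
Your treatment of \eqref{equ:expmom} and \eqref{equ:gampnf} matches the paper: the first is exponential moment plus submultiplicativity, and the second is the Guivarc'h--Le Page telescoping (the paper cites \cite[Lemma 4.6]{Boyer} and sharpens the $|z|$-factor to $|b|^\gamma$ via an interpolated inequality, exactly as you describe).

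Where you diverge from the paper is \eqref{equ:infpnf}. The paper does \emph{not} argue directly from contraction. Instead it first establishes \eqref{equ:gampnf} with $z=0$, feeds this Lasota--Yorke inequality into the Ionescu--Tulcea--Marinescu theorem to obtain quasi-compactness of $P_0$ on $C^\gamma(X)$, and then observes that on $\ker N_0$ the operator $P_0$ has no peripheral eigenvalue, hence spectral radius strictly less than $1$ there. This yields $\|(P_0^n-N_0)f\|_\gamma\le C\rho^n\|f\|_\gamma$ in one stroke, uniformly over the one- and two-measure cases.

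Your direct route works cleanly when the stationary measure is unique (anchor $P_0^n f$ to $\int f\,d\nu$ and use $c_\gamma(P_0^n f)\le C\rho^n c_\gamma(f)$), but your two-measure argument has a gap. Writing $f=p_1 f+p_2 f$ is just $f=f$; the operator $P_0$ does not commute with multiplication by $p_j$, so you cannot ``apply the contraction on each ergodic component'' as stated. What one actually needs is that $P_0$ restricted to $\ker N_0$ has spectral radius $<1$, and the clean way to see this---without re-deriving a probabilistic rate for the convergence of $p_j(g_n\cdots g_1 x)$---is precisely the ITM argument the paper uses. Your contraction estimate is the essential input to ITM, so you are one citation away from closing the gap; but the decomposition you sketch does not do it on its own.
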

	The first inequality is due to exponential moment and the Cauchy-Schwarz inequality. 
	
	The third inequality \eqref{equ:gampnf} can be proved similarly as in Lemma 4.6 of \cite{Boyer}. Boyer only proved similar inequality for $\Re z\geq 0$, but the same argument also works for all $|\Re z|$ small. The term $|b|^\gamma$ is $|z|$ in \cite{Boyer}, which comes from \cite[Page 57, Line 7]{Boyer}. But this term can be replaced by $|z|^\gamma$ if we replace that inequality by a sharper inequality \cite[Page 133, line 14]{Li-thesis}. 
	
	By \eqref{equ:gampnf} with $z=0$ and Theorem (Ionescu-Tulcea and Marinescu in [ITM50]), we know that $P_0$ has essential spectral radius $r$ less than $1$. This means that in the subset of the complex plane $B(0,1)-B(0,r)$, the spectral values of operator $P_0$ are eigenvalues and countable. Moreover, the possible accumulation points of the eigenvalues are in $B(0,r)$. Therefore, since $P_0$ restricted to $\ker N_0$ has no eigenvalue of absolute value $1$ ($N_0$ is a projection on $C^\gamma(\Sh^{n-1})$ whose image is exactly the $P_0$-invariant subspace of $C^\gamma(\Sh^{n-1})$), we know that $P_0$ has spectral radius less than $1$ in $\ker N_0$. We conclude that there exist $\rho<1$ and $C>0$ such that for $n\in\N$ and $f\in C^\gamma({ X})$,
	\[ \|(P_0^n-N_0)f\|_\gamma=\|P_0^n(1-N_0)f\|_\gamma\leq C\rho^n\|f\|_\gamma. \]
	In particularly, this implies \eqref{equ:infpnf}.
	
	Hence by the same argument as in \cite[page 134]{Li-thesis}, we only need to establish a similar $L^2$ estimate (Proposition 4.23 in \cite{Li2}) in our case. The proof of spectral gap (Lemma \ref{prop:spectralgap}) is complete by the following Lemma.
	\end{proof}
	It is useful to take a different regularity norm, for $f$ in $C^\gamma( X)$ and $b$ in $\R^* = \R \setminus \{0\}$, let 
		\[ \|f\|_{\gamma,b} :=\|f\|_\infty+c_\gamma(f)/|b|^\gamma. \]	
	\begin{lemma}\label{prop:l2}
		With the same assumption as in Lemma \ref{prop:spectralgap}, for every $\gamma>0$ small enough, for $|b|$ large enough and $|a|$ small enough, there exist $\eps_2,C_2>0$ such that for $f$ in $C^\gamma$ with $\|f\|_{\gamma,b}\leq 1$ and any $\lambda$-stationary measure $\nu$ on $ X$, we have
		\[\int|P^{[C_2\ln|b|]}_{a+ib}f|^2\,d\nu\leq e^{-\eps_2\ln|b|}. \]
	\end{lemma}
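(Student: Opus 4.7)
The plan is to deduce this $L^2$ estimate by adapting the proof of the analogous \cite[Proposition 4.23]{Li2} on the projective space $\P(V)$ to the sphere $X = \mathbb{S}^{d-1}$. First I would expand
\[
\int \bigl|P^{N}_{a+ib} f\bigr|^2 \, d\nu(x) = \int\!\int\!\int e^{(a+ib)\sigma(g,x) + (a-ib)\sigma(h,x)} f(gx) \overline{f(hx)} \, d\lambda^{*N}(g)\, d\lambda^{*N}(h)\, d\nu(x),
\]
with $N = [C_2 \ln |b|]$. Using $\|f\|_\infty \leq \|f\|_{\gamma, b} \leq 1$ together with the finite exponential moment of $\lambda$ and smallness of $|a|$, the real-exponential factors are uniformly bounded and the task reduces to controlling the oscillation in the phase $b(\sigma(g,x)-\sigma(h,x))$ as $(g, h, x)$ vary.

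Second, I would exploit that the norm cocycle $\sigma(g, x) = \log(|gv|/|v|)$ is invariant under the antipodal map $x \mapsto -x$, so it factors through the projection $\pi : X \to \P(V)$. Splitting $f = f_+ + f_-$ into even and odd parts (each with $\|f_\pm\|_{\gamma, b} \leq \|f\|_{\gamma, b}$), the operator $P_{a+ib}$ preserves this decomposition since $g(-x) = -gx$. The even piece descends to a function on $\P(V)$, where the pushforward under $\pi$ of any $\lambda$-stationary measure on $X$ agrees (up to convex combination of the ergodic components $\nu_1, \nu_2$) with the unique Furstenberg measure $\bar\nu$; the odd piece squared is antipodally invariant, and the square-integral can be analyzed through the natural line-bundle transfer operator on $\P(V)$. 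Either way, the estimate reduces to the projective one.

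Third, the projective estimate itself is driven by the discretized non-commutative sum-product theorem of \cite{hesaxce, Li3}. The Zariski density and $\R$-split hypotheses enter essentially here: a split Cartan subgroup acts on the weight decomposition of $V$ as genuine $\R^*$ scalars, providing the arithmetic non-concentration in the additive target $(\R, +)$ of $\sigma$ that Bourgain's machinery requires. Combined with Guivarc'h regularity (Lemma \ref{lma:guivarch}) of $\bar\nu$ near proper algebraic subvarieties of $\P(V)$, this yields the polynomial decay $|b|^{-\eps_2}$ after $N \sim C_2 \ln|b|$ many convolutions.

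The principal obstacle is verifying the transfer of regularity norms across the projection $\pi$ and across the even/odd splitting: the weighted seminorm $\|\cdot\|_{\gamma, b}$ on $X$ must uniformly control the corresponding seminorm on $\P(V)$ (and on the tautological line bundle for the odd part), with the dependence on $|b|$ through the $|b|^\gamma$-scaled Hölder modulus tracked carefully. Since $\pi$ is a local isometry of compact Riemannian manifolds of the same dimension, bi-Lipschitz away from a lower-dimensional diagonal, this reduces to a routine but unavoidable bookkeeping step before the projective-space estimate can be invoked.
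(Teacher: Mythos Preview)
Your outline matches the paper's Step 1 (even/odd splitting to pass from $X$ to $\P(V)$), but you overcomplicate the odd case. No line-bundle transfer operator is needed: once you expand the square into the bilinear form
\[
A_{g,h} = \int_X e^{z\sigma(g,w)+\bar z\sigma(h,w)}\, f(gw)\overline{f(hw)}\,d\nu(w),
\]
the product $f_{g,h}(w)=f(gw)\overline{f(hw)}$ is antipodally \emph{invariant} whenever $f$ is purely even or purely odd. Since $\sigma$ also factors through $\P(V)$, the entire integrand descends and $A_{g,h}$ is already an integral against the Furstenberg measure $\nu_{\P V}$. This is the paper's observation, and it bypasses any twisted-operator bookkeeping; the obstacle you flag in your last paragraph about tracking $\|\cdot\|_{\gamma,b}$ across a line-bundle lift simply does not arise.

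The genuine gap is your Step 3. The result you intend to invoke, \cite[Proposition 4.23]{Li2}, is proved only for \emph{semisimple} groups acting on flag varieties, whereas here $G$ is merely reductive. The paper fills this by decomposing the norm cocycle as $\sigma(g,u) = c(g) + \sigma_{ss}(g,u)$ into a central character plus a semisimple part, lifting the semisimple part from $\P(V)$ to the full flag variety $\cP$, and passing to the quotient $G_1 = G/C$ by the connected centre (so that $q_*\lambda$ is Zariski dense in the semisimple $G_1$). The Fourier decay of the stationary measure on $\cP$ from \cite{Li2} then applies to $q_*\lambda$, while the residual factor $e^{zc(g)+\bar z c(h)}$ is handled separately via the exponential moment and the smallness of $|a|=|\Re z|$. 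Your third paragraph correctly identifies splitness as the source of arithmetic non-concentration for the sum-product input, but it does not isolate this structural reduction; without it you cannot legitimately cite the projective estimate.
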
 
	\begin{proof}
		Recall that Proposition 4.23 in \cite{Li2} is the same statement for connected semisimple algebraic groups defined and split over $\R$ and the stationary measure is on projective spaces or flag varieties. We will indicate the modification needed to prove our case. In the proof of Proposition 4.23, using Cauchy-Schwarz inequality, then we apply the Fourier decay of stationary measure to the following quantity for $g,h$ in $G$
		\[A_{g,h}:=\int_{ X} e^{z\sigma(g,w)+\bar{z}\sigma(h,w)}f(gw)\bar{f}(hw)\,d\nu(w). \]
		
		\textbf{Step 1}: Reduce the spherical case to projective case. We need to separate the functions on the sphere. For $w\in  X$, let $r(w)=-w$ be its antipodal point in $ X$. Then every function $f$ on $ X$ can be separated into 
		\[f=f_1+f_2 \text{ with } f_1(w)=\frac{1}{2}(f(w)+f(r(w))),\ f_2(w)=\frac{1}{2}(f(w)-f(r(w))). \]
		Then $f_1$ is invariant under the action of $r$ and $f_2$ goes to its additive inverse under the action of $r$. What's more, the $C^\gamma$ norm of $f_1,f_2$ is less than $f$. Let $C^\gamma_1$ and $C^\gamma_2$ be the spaces of $\gamma$ H\"older function on $ X$ which is invariant and anti-invariant under the action of $r$, respectively.
		
		We will prove Lemma \ref{prop:l2} for $C^\gamma_1$ and $C^\gamma_2$, then Lemma \ref{prop:l2} also holds for $C^\gamma$. 
		For $f\in C^\gamma_j$ with $j=1$ or $2$, we know that the product
		\[f_{g,h}(w)=f(gw)\bar{f}(hw) \]
		is in $C^\gamma_1$. Let $u=\pi(w)$, where $\pi$ is the map from $ X$ to $\P V$. The cocycle function is actually defined on $\P V$, which is also invariant under $r$. Therefore
		\[A_{g,h}=\int e^{z\sigma(g,u)+\bar{z}\sigma(h,u)}f_{g,h}(u)\,d\nu_{\P V}(u), \]
		where $\nu_{\P V}$ is the unique $\lambda$-stationary measure on $\P V$ which is also the pushforward of the measure $\nu$ under the map $\pi$.
		Then we can continue as in the proof of Proposition 4.23 for projective spaces. 
		
		\textbf{Step 2}: Reduce the reductive case to the semisimple case. There is no essential difficulty and the following is a conceptual argument. (This is the only part we really need theory of algebraic groups. For the first time reading, you can assume $G=\mathrm{GL(2,\R)}$. For more details of algebraic groups, please see \cite{borel} and \cite{benoistquint})
		
		We recall some notation from \cite[Section 2.1]{Li2},  the Lie algebra $\mathfrak a$ is the Lie algebra of a maximal torus $A$ of $G$, the semisimple part of $\mathfrak a$ is $\mathfrak b$, the flag variety $\cP$, the Iwasawa cocycle $\sigma_I$ from $G\times \cP$ to $\mathfrak a$.
		The norm cocycle can be separated into central part and semisimple part for $g$ in $G$ and $u$ in $\P V$ (See for example \cite[Page 8]{Li2})
		\[\sigma(g,u)=c(g)+\sigma_{ss}(g,u).\]
		For example when the Zariski closure is $\mathrm{GL}(V)$, then the central part is the logarithm of the absolute value of the determinant. For more details of this example, please see the example in \cite[Page 11]{Li2}.
		We can write
		\[A_{g,h}=e^{zc(g)+\bar{z}c(h)}\int e^{z\sigma_{ss}(g,u)+\bar{z}\sigma_{ss}(h,u)}f_{g,h}(u)\,d\nu_{\P V}(u). \]
		The integral only involves the semisimple part and we want to use the Fourier decay on the flag variety to deal with it.
		
		Let $\pi_V$ be the map from the flag variety $\cP$ to the projective space $\P V$. 
		Since the map $\pi_V$ is $G$-equivalent, we can lift every thing from $\P V$ to the flag variety  $\cP$, the $\lambda$-stationary measure $\nu_{\P V}$ to the $\lambda$-stationary measure $\nu_{\cP}$, the function $f$ on $\P V$ to a function $\tilde{f}$ on $\cP$, the cocycle $\sigma_{ss}(g,u)$ to $\sigma_{ss}(g,\eta_u)$, where $\eta_u$ is in $\cP$ such that $\pi_V(\eta_u)=u$.
		
		Let $q$ be the quotient map from $G$ to its quotient by the connected centre $C$, that is $q: G\rightarrow G_1=G/C$.
		The quotient group $G_1$ is a connected semisimple algebraic group defined and split over $\R$, which satisfies the hypothesis of Proposition 4.23 in \cite{Li2}. Then $q_*(\lambda)$ is also a Zariski dense measure on $G_1$. 
			\begin{center}
							\begin{tikzcd}
								G \arrow[d, "q"']  \arrow[r, ""] & \bf \cP \arrow [d, "\pi_V "]  \\
								G_1 \arrow[ru,dotted]  & \P V
							\end{tikzcd}
			\end{center}	
		Since the action of $G$ on $\cP$ factors through $G_1=G/C$, we have $g\eta=q(g)\eta$ for $g\in G$ and $\eta\in\cP$ and the $q_*(\lambda)$-stationary measure on $\cP$ is also $\nu_{\cP}$. Since $q$ induces an injective map on $\mathfrak b$, the Lie algebra $\mathfrak b$ can also be seen as the Lie algebra of a maximal torus in $G_1$. For the semisimple part of the cocycle, there exists a nonzero weight $\chi$ in the dual space of the Lie algebra $\mathfrak b$ such that
		\begin{equation}\label{equ:sigss}
		\sigma_{ss}(g,\eta)=\chi\sigma_I(q(g),\eta), 
		\end{equation}
		where $\sigma_I(q(g),\eta)$ is the Iwasawa cocycle on $G_1\times\cP$ which takes value in the Lie algebra $\mathfrak b$. Then as in the proof of Proposition 4.23, we can apply the Fourier decay of the stationary measure on flag variety for $q_*(\lambda)$ on $G_1$ and we conclude that the integral
		\[\int e^{z\sigma_{ss}(g,u)+\bar{z}\sigma_{ss}(h,u)}f_{g,h}(u)\,d\nu_{\P V}(u)=\int e^{z\chi\sigma_I(q(g),\eta)+\bar{z}\chi\sigma_I(q(h),\eta)}\tilde f_{g,h}(\eta)\,d\nu_{\cP}(\eta) \]
		 is small for most pairs $(g,h)\in G\times G$. 
		
		For the extra term, we know 
		\[|e^{zc(g)+\bar{z}c(h)}|= e^{\Re z(c(g)+c(h))}. \]
		We will only sum up $g,h$ with the law of $\lambda^{*n}$, where $n=[C_2\ln|\theta|]$. If we assume that the support of $\lambda$ is compact, then we know $|c(g)|,|c(h)|\leq Cn$. When $|\Re z|$ is small enough, this term is less than
		\[e^{|\Re z|Cn}\leq e^{\eps_2\ln|\theta|/2}. \]
		(For finite exponential moment case, by using Large deviation principle, we can obtain similar result.) 

	This completes the proof of Lemma \ref{prop:l2}.
\end{proof}
\begin{remark}
	One technical point is that in the above proof we need the norm on $V$ is ``good" with respect to $G$, which enables us to compare the norm cocycle and the Iwasawa cocycle in \eqref{equ:sigss}. Lemma \ref{prop:spectralgap} and Proposition \ref{prop:renerr} actually hold for any norm on $V$, see \cite[Remark 4.21]{Li2}.
\end{remark}

\subsection{Renewal theorem for Residue process}

We consider the residue process for the cutoff of a function $f$ on $X\times\R^2$.

\begin{definition}[Residue process]
	Define the operator $\Car$ from bounded Borel functions on $X\times\R^2$ to functions on $X\times\R$ by
	\[\Car f(x,t)=\sum_{n\geq 0}\int_{\sigma(g,x)\geq -t> \sigma(hg,x) } f(hgx,\sigma(h,gx),\sigma(g,x)+t)\,d\lambda(h) \, \,d\lambda^{ \ast n}(g). \]
	
\end{definition}

To state the renewal theorem, that is, the asymptotics of $\Car f(x,t)$, we need to talk about the Lipschitz regularity of the test functions $f : X \times \R^2 \to \C$. Define the \textit{Lipschitz norm} of $f$ by  
	\[\|f\|_{\mathrm{Lip}}=\|f\|_\infty+\sup_{(x,v,u)\neq(x',v',u')}\frac{|f(x,v,u)-f(x',,v',u')|}{d(x,x')+|v-v'|+|u-u'|}.\]
Note that this extends the definition of the Lipschitz norm functions on $X \times \R$ we used in Proposition \ref{prop:stop} and Proposition \ref{prop:stopexp} earlier.


\begin{prop}[Renewal theorem irreducibility and proximality]\label{prop:rescar} 
	Let $\lambda$ be a Borel probability measure on $\mathrm{GL}(V)$ with an exponential moment, such that the group $\Gamma_\lambda$ acts proximally and strongly irreducibly on $V$. Suppose that the first Lyapunov exponent $\sigma_\lambda$ is negative.
	
	Let $ f$ be a continuous function on $X\times\R^2$ with $\| f\|_\lf$ finite. Assume that the projection of $\supp f$ on $\R_v$ is contained in a compact set $K$. For all $\delta>0$, $t>\max\{2(|K|+\delta),20\}$ and $x$ in $X$, we have, as $t \to +\infty$ that
	\begin{align*}
	\Car f(x,t) = \,\, & \frac{1}{|\sigma_{\lambda}|}\int_{X}\int_G\int^{-\sigma(h,y)}_{0} f(hy,\sigma(h,y),u)\,\,d u\,\,d\lambda(h)\,\,d\nu_x(y)\\&+O_K(\delta+O_\delta/t)\| f\|_\lf,
	\end{align*}
	where $O_K$ does not depend on $\delta, f,t,x$, the integral $\int^{-\sigma(h,y)}_{0}=0$ if $\sigma(h,y)>0$.
\end{prop}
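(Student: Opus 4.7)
The plan is to express $\Car f(x,t)$ as the ordinary renewal operator $R$ from Proposition~\ref{prop:1-pz} applied to an auxiliary function on $X\times\R$, then invoke part (ii) of that proposition. Set
\[
\Phi(y,u):=\int_G f(hy,\sigma(h,y),u)\,\mathbf{1}_{\sigma(h,y)+u\leq 0}\,d\lambda(h).
\]
Using the cocycle identity $\sigma(hg,x)=\sigma(h,gx)+\sigma(g,x)$ and the substitution $u=\sigma(g,x)+t$, the double condition $\{\sigma(g,x)\geq -t>\sigma(hg,x)\}$ defining $\Car$ becomes $\{u\geq 0\}\cap\{\sigma(h,gx)+u\leq 0\}$, which after interchanging the two integrals gives the identity $\Car f(x,t)=R(\Phi\cdot\mathbf{1}_{u\geq 0})(x,t)$. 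The desired main term then drops out of Proposition~\ref{prop:1-pz}(ii) applied to $F:=\Phi\cdot\mathbf{1}_{u\geq 0}$: since $\sigma(h,y)\in K$ on $\supp f$, the function $F$ is supported in $X\times[0,|K|]$, so the assumption $t>2(|K|+\delta)$ makes the upper cut-off at $t$ in $\Pi_0 F(x,t)$ harmless, and Fubini rewrites the main contribution as
\[
\frac{1}{|\sigma_\lambda|}\int_X\int_G\int_0^{-\sigma(h,y)} f(hy,\sigma(h,y),u)\,du\,d\lambda(h)\,d\nu_x(y).
\]

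The error analysis occupies the bulk of the work because $F$ carries two sharp cut-offs and is not Lipschitz. I would regularise both $\mathbf{1}_{u\geq 0}$ and $\mathbf{1}_{\sigma(h,y)+u\leq 0}$ by smooth bumps of transition width $\delta$, producing an approximation $F_\delta$ with $\|F_\delta\|_{W^{1,\infty}C^\gamma}=O(\|f\|_{\mathrm{Lip}}/\delta)$ supported in $u\in[-\delta,|K|+\delta]$. The difference $F-F_\delta$ is carried by two strips of width $O(\delta)$, so the standard renewal-measure bound for thin strips (a consequence of Proposition~\ref{prop:1-pz}(ii) itself, or of Blackwell's theorem on $\R$) gives $R(F-F_\delta)(x,t)=O_K(\delta)\|f\|_\infty\leq O_K(\delta)\|f\|_{\mathrm{Lip}}$. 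Applying Proposition~\ref{prop:1-pz}(ii) to $F_\delta$, extended to Lipschitz compactly supported functions via a standard Paley--Wiener approximation, yields the main term above plus the remainder $\int e^{it\xi}U(i\xi)\widehat{F_\delta}(x,\xi)\,d\xi$; integrating by parts in $\xi$ and using the polynomial control of $U(i\xi)$ on the imaginary axis converts this into a bound of the form $O_\delta(1/t)\|f\|_{\mathrm{Lip}}$, the $\delta$-dependence entering through the $1/\delta$ inflation of derivatives of $F_\delta$.

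The main obstacle is the trade-off inherent to the smoothing scale: smoothing at width $\delta$ costs $O(\delta)$ in the direct approximation but inflates $\|F_\delta\|_{\mathrm{Lip}}$ by $1/\delta$, which re-enters through the analytic-part error. Balancing these two contributions to produce precisely the claimed bound $O_K(\delta+O_\delta/t)\|f\|_{\mathrm{Lip}}$ is the technical heart of the argument, and closely mirrors the treatment of the analogous statement on the projective space in \cite[Proposition~4.17]{Li1}. The regularity in the $X$-direction required to legitimately apply Proposition~\ref{prop:1-pz}(ii) to $F_\delta$ is supplied on one hand by the Guivarc'h regularity of $\nu_x$ recorded in Lemma~\ref{lma:guivarch}, and on the other by the contracting action of $\lambda$ on $C^\gamma(X)$ furnished by proximality and strong irreducibility.
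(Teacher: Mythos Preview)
Your proposal is correct and follows the same route the paper takes: the paper's own proof is a one-line reference to \cite[Proposition~4.17]{Li1}, and the argument you sketch---rewrite $\Car f$ as $R$ applied to $Qf$ with the two sharp cut-offs, smooth both at scale $\delta$, apply Proposition~\ref{prop:1-pz}(ii), and balance the $O(\delta)$ strip error against the $O_\delta/t$ from the analytic remainder---is precisely the template the paper spells out in detail for the exponential variant in Lemma~\ref{prop:residue} and the proof of Proposition~\ref{prop:rescarexp}. One small caution: in the merely proximal/irreducible setting you do not have polynomial control of $U(i\xi)$ as $|\xi|\to\infty$ (that is what the spectral gap buys in the split case); what actually makes the integration by parts go through is the band-limiting step you already mention, after which only the local analyticity of $U$ on a compact interval is needed and the resulting constant is absorbed into $O_\delta$.
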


By the same argument as in \cite{Li1}, we can establish the renewal theorem for residue process from the classic renewal theorem, that is Proposition 4.17 in \cite{Li1}. This gives a proof of our renewal theorem Proposition \ref{prop:rescar}.

For the version with exponential error term, the argument is similar. We will establish analogous versions of Proposition 4.15  and Proposition 4.17 in \cite{Li1}. Since we have a very strong error term in Proposition \ref{prop:renerr}, the argument will be much more direct. 
Here we need to consider higher order regularity, similar to assuming $f$ is a Sobolev function, and we define the following $L^1$-\textit{Lipschitz norm} of $f : X \times \R^2 \to \C$ by
\begin{align}\|f\|_{L^1\mathrm{Lip}} :=\int\left(\sup_{x,v}|f(x,v,u)|+\sup_{(x,v)\neq (x',v')\in X\times\R}\frac{|f(x,v,u)-f(x',v',u)|}{d(x,x')+|v-v'|} \right)\,\,d u \label{eq:l1lip}. \end{align}

\begin{prop}[Renewal theorem for $\R$-splitting Lie groups on the sphere]\label{prop:rescarexp}
	Let $G$ a connected reductive group defined and split over $\R$, which acts irreducibly on $V$.
	Let $\mu$ be a Zariski dense Borel probability measure on $G$ with a finite exponential moment and $\sigma_\mu<0$. There exists $\eps>0$ such that the following holds. Let $f$ be a smooth compactly supported function on $ X\times \R^2$. Then for $t>0$ and $x\in X$, we have
	\begin{align*}
	\Car f(x,t) = \,\, &\frac{1}{|\sigma_{\lambda}|}\int_{X}\int_G\int^{-\sigma(h,y)}_{0} f(hy,\sigma(h,y),u)\,d u\,d\lambda(h)\,d\nu_x(y)\\
	&+e^{-\eps t/4}O(e^{\eps|\supp f|}(\|f\|_{L^1\mathrm{Lip}}+\|\partial_{uu}f\|_{L^1\mathrm{Lip}})),
	\end{align*}
	where the integral $\int^{-\sigma(h,y)}_{0}=0$ if $\sigma(h,y)>0$.
\end{prop}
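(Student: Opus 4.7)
The plan is to reduce the residue process $\Car f(x,t)$ to the standard renewal operator $R$ applied to an auxiliary function, then invoke Proposition \ref{prop:renerr}, which already comes with an exponential error term. Because the conversion introduces indicator-type discontinuities, one then smooths, applies the renewal theorem to the smooth approximants, and balances the smoothing scale against the exponential rate. Since we have the strong Proposition \ref{prop:renerr} available, the argument is considerably shorter than the analogue in \cite{Li1} where only a polynomial error term was available.

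\textbf{Step 1: Reduction to $R$.} Define
\[
\phi(y,s) \;:=\; \mathbf{1}_{s\geq 0}\int_G \mathbf{1}_{\sigma(h,y)<-s}\, f\bigl(hy,\sigma(h,y),s\bigr)\, d\lambda(h).
\]
Using the cocycle identity $\sigma(hg',x)=\sigma(h,g'x)+\sigma(g',x)$, a direct bookkeeping against the definitions of $R$ and $\Car$ yields $\Car f(x,t)=R\phi(x,t)$. A Fubini computation then gives, as soon as $t>|\supp f|$ (in the $u$-variable plus the $\sigma$-range),
\[
\Pi_0\phi(x,t)\;=\;\int_X\int_G\int_0^{-\sigma(h,y)} f\bigl(hy,\sigma(h,y),u\bigr)\,du\,d\lambda(h)\,d\nu_x(y),
\]
which is precisely the claimed main term.

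\textbf{Step 2: Smoothing.} Fix $\delta\in(0,1)$ and pick smooth cutoffs $\psi_\delta^\pm$ approximating $\mathbf{1}_{[0,\infty)}$ and $\mathbf{1}_{(-\infty,0)}$ from above/below at scale $\delta$, with $\|\psi_\delta^\pm\|_{C^k}\lesssim \delta^{-k}$. Setting
\[
\phi^\pm_\delta(y,s)\;=\;\psi_\delta^\pm(s)\int_G \psi_\delta^\pm\bigl(\sigma(h,y)+s\bigr)\, f(hy,\sigma(h,y),s)\,d\lambda(h)
\]
gives smooth functions with $\phi^-_\delta\leq \phi\leq \phi^+_\delta$ (after, as usual, splitting $f=f_+-f_-$). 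Because $\sigma(h,\cdot)$ is smooth on $X$ and $\lambda$ has compact exponential support, the norms $\|\phi^\pm_\delta\|_{L^1\mathrm{Lip}}$, $\|\partial_{ss}\phi^\pm_\delta\|_{L^1\mathrm{Lip}}$ are controlled by $\delta^{-3}$ times $\|f\|_{L^1\mathrm{Lip}}+\|\partial_{uu}f\|_{L^1\mathrm{Lip}}$, with the $s$-support comparable to $|\supp f|$.

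\textbf{Step 3: Apply the exponential renewal theorem and balance.} Proposition \ref{prop:renerr} applied to $\phi^\pm_\delta$ yields
\[
R\phi^\pm_\delta(x,t)\;=\;\tfrac{1}{|\sigma_\lambda|}\Pi_0\phi^\pm_\delta(x,t)\;+\;e^{-\eps t}\,O\!\left(e^{\eps|\supp f|}\delta^{-3}\bigl(\|f\|_{L^1\mathrm{Lip}}+\|\partial_{uu}f\|_{L^1\mathrm{Lip}}\bigr)\right).
\]
Sandwiching $\Car f(x,t)=R\phi(x,t)$ between $R\phi^-_\delta(x,t)$ and $R\phi^+_\delta(x,t)$, it remains to estimate $\Pi_0(\phi^+_\delta-\phi^-_\delta)(x,t)$. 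Since the support of $\phi^+_\delta-\phi^-_\delta$ is contained in a $\delta$-thickening of the two hyperplanes $\{s=0\}$ and $\{\sigma(h,y)=-s\}$, Lemma \ref{lma:guivarch} applied in the $y$-variable (for each fixed $h,s$, the set $\{y:|\sigma(h,y)+s|<\delta\}$ is a $\delta$-neighborhood of a codimension-one set of the sphere, after linearizing $\sigma(h,\cdot)$) yields a bound $O(\delta^{\alpha}\|f\|_{L^1\mathrm{Lip}}e^{\eps|\supp f|})$ for some $\alpha>0$ coming from Guivarc'h. Choosing $\delta=e^{-\eps t/(4\cdot 3/\alpha)}$ (so that both error contributions are at most $e^{-\eps t/4}$ times the claimed norms, up to renaming $\eps$) delivers the proposition.

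\textbf{Main obstacle.} The delicate point is Step 2/3: the indicator $\mathbf{1}_{\sigma(h,y)<-s}$ couples $y$ and $s$ through the cocycle, so mollifying it is not a plain tensor-product smoothing, and one has to keep track that $\|\phi^\pm_\delta\|_{L^1\mathrm{Lip}}$ and $\|\partial_{ss}\phi^\pm_\delta\|_{L^1\mathrm{Lip}}$ blow up only polynomially in $\delta^{-1}$ (and not, say, exponentially), so the balancing actually produces a positive exponential rate. The accompanying estimate of $\Pi_0(\phi^+_\delta-\phi^-_\delta)$ requires the Guivarc'h regularity of $\nu_x$; fortunately it is already available in our setting via Lemma \ref{lma:guivarch} and the fact that the level sets of $\sigma(h,\cdot)$ are (to first order) affine hyperplanes intersected with $X$.
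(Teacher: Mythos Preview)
Your overall architecture is right and close to the paper's: smooth the hard cutoffs defining $\Car$, apply Proposition \ref{prop:renerr} to the smoothed object, and balance $\delta$ against $e^{-\eps t}$. But Step~3 contains an actual error, and fixing it also shows that your ``main obstacle'' is not where you think it is.

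\textbf{The Guivarc'h step is wrong and unnecessary.} You claim that for fixed $h,s$ the set $\{y\in X:|\sigma(h,y)+s|<\delta\}$ is a $\delta$-neighbourhood of a hyperplane section of $X$, so that Lemma~\ref{lma:guivarch} applies. This is false: $\sigma(h,y)=\tfrac12\log\langle h^\top h\,y,y\rangle$, so the level set $\{y:\sigma(h,y)=-s\}$ is the intersection of the sphere with a \emph{quadric} (an ellipsoid), not with a hyperplane, and Lemma~\ref{lma:guivarch} is stated only for hyperplanes. Fortunately you never needed any regularity of $\nu_x$ here. In
\[
\Pi_0(\phi^+_\delta-\phi^-_\delta)(x,t)=\int_{-\infty}^{t}\int_X(\phi^+_\delta-\phi^-_\delta)(y,s)\,d\nu_x(y)\,ds
\]
the outer integration is Lebesgue in $s$. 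For fixed $y,h$ the set $\{s:|\sigma(h,y)+s|<\delta\}$ has Lebesgue measure $2\delta$, and the set $\{|s|<\delta\}$ likewise; hence by Fubini $\Pi_0(\phi^+_\delta-\phi^-_\delta)=O(\delta\|f\|_\infty)$ with no input on $\nu_x$ at all. This is exactly how the paper handles the analogous term (equation \eqref{equ:major}), and with this fix your balancing gives the stated exponent after choosing $\delta=e^{-\eps t/4}$.

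\textbf{Comparison with the paper's route.} The paper does not form your $\phi$ on $X\times\R$ and then smooth; instead it smooths $f$ directly on $X\times\R^2$ via $f_\delta(x,v,u)=\phi_\delta(u)\phi_\delta(-v-u)f(x,v,u)$, and compares $\Car f$ with $E f_\delta$, where $E$ is the auxiliary operator of Lemma~\ref{prop:residue} (so that $E=R\circ Q$ with $Qf(x,t)=\int f(hx,\sigma(h,x),t)\,d\lambda(h)$). The difference $|Ef_\delta-\Car f|$ is then dominated by $\|f\|_\infty\,R(\BB_{-\delta\le u\le 0})$ and estimated by one more application of Proposition~\ref{prop:renerr} to a smooth majorant of the indicator, yielding $O(\delta+e^{-\eps t}\delta^{-1})$. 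This has two advantages over your version: the $h$-integration is packaged into $Q$ once (so the exponential-moment bound on Lipschitz constants of $\sigma(h,\cdot)$ is invoked in one place, Lemma~\ref{prop:residue}, rather than inside every norm estimate for $\phi_\delta^\pm$), and no sandwiching/positivity decomposition of $f$ is needed. Your approach is a legitimate alternative once the $du$-integration replaces the spurious Guivarc'h step; just be careful that the relevant norms in Proposition~\ref{prop:renerr} are $L^1_\R C^\gamma_X$, and that controlling $\|\phi_\delta^\pm\|$ in $y$ uses the exponential moment of $\lambda$ (through the Lipschitz constant of $y\mapsto\sigma(h,y)$), not any ``compact exponential support''.
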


Let $f$ be a positive bounded Borel function on $ X\times\R^2$. For $(x,t)\in  X\times\R$, we define the residue operator by
\[ Ef(x,t)=\sum_{n\geq 0}\int f(hgx,\sigma(h,gx),\sigma(g,x)+t)\,d\lambda^{*n}(g)\,d\lambda(h). \]
\begin{lemma}\label{prop:residue}
	With the same assumption as in Proposition \ref{prop:rescarexp}, let $f$ be a smooth compactly supported function on $ X\times \R^2$. Then for $t>0$ and $x\in X$, we have
	\begin{equation*}
	\begin{split}
	E f(x,t)&=\frac{1}{|\sigma_{\lambda}|}\int_{X}\int_G \int_{-\infty}^{t} f(hy,\sigma(h,y),u)\,d u\,d\lambda(h)\,d\nu_x(y)\\
	&+e^{-\eps t}O(e^{\eps|\supp f|}(\|f\|_{L^1\mathrm{Lip}}+\|\partial_{uu}f\|_{L^1\mathrm{Lip}} )).
	\end{split}
	\end{equation*}
\end{lemma}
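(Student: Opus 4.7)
The plan is to reduce the residue operator $E$ to the renewal operator $R$ of Proposition \ref{prop:renerr} by integrating out the ``last step'' matrix $h$. Define the auxiliary function $\tilde f : X \times \R \to \C$ by
\[
\tilde f(x,u) := \int_G f(hx,\sigma(h,x),u)\,d\lambda(h).
\]
A direct comparison of the defining sums then yields the key identity
\[
Ef(x,t) = \sum_{n\geq 0}\int \tilde f(gx, t+\sigma(g,x))\,d\lambda^{*n}(g) = R\tilde f(x,t),
\]
so it suffices to apply Proposition \ref{prop:renerr} to $\tilde f$ and verify that the resulting main term and error term match the claim.

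For the main term, using the identity above and the formula $\Pi_0 \tilde f(x,t)=\int_{-\infty}^t N_0\tilde f(x,u)\,du=\int_{-\infty}^t\int_X \tilde f(y,u)\,d\nu_x(y)\,du$, I would unfold the definition of $\tilde f$ and use Fubini to recover exactly the expression
\[
\frac{1}{|\sigma_{\lambda}|}\int_{X}\int_G \int_{-\infty}^{t} f(hy,\sigma(h,y),u)\,du\,d\lambda(h)\,d\nu_x(y).
\]
This step is essentially bookkeeping once the identity $Ef = R\tilde f$ is established.

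For the error term, Proposition \ref{prop:renerr} produces a bound of the form
\[
e^{-\eps|t|}\,O\bigl(e^{\eps|\supp \tilde f|}(\|\partial_{uu}\tilde f\|_{L^1_\R C^\gamma_X}+\|\tilde f\|_{L^1_\R C^\gamma_X})\bigr),
\]
so I need to control $\|\tilde f\|_{L^1_\R C^\gamma_X}$ and $\|\partial_{uu}\tilde f\|_{L^1_\R C^\gamma_X}$ by $\|f\|_{L^1\mathrm{Lip}}+\|\partial_{uu}f\|_{L^1\mathrm{Lip}}$, noting that differentiation in $u$ commutes with the $h$-integration. Since $X$ is bounded, any Lipschitz function on $X$ is $C^\gamma$ for every $0<\gamma\leq 1$, so the $C^\gamma$ regularity of $x\mapsto \tilde f(x,u)$ follows from a standard two-variable estimate combining the Lipschitz regularity of $f$ in its first two variables with the bounds
\[
d(hx,hx') \leq C\|h\|\|h^{-1}\|\, d(x,x'), \qquad |\sigma(h,x)-\sigma(h,x')|\leq C\|h\|\|h^{-1}\|\,d(x,x'),
\]
valid for the $G$-action on $X$ and the norm cocycle. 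Integrating against $\lambda$ and using the finite exponential moment hypothesis to ensure $\int (\|h\|\|h^{-1}\|)^\gamma\,d\lambda(h)<\infty$ for $\gamma$ small then gives the desired norm comparison. Likewise, the support of $\tilde f$ in the $u$-variable is contained in the projection of $\supp f$ on the $u$-axis, so $|\supp \tilde f|\leq |\supp f|$.

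The main obstacle is verifying the Hölder-in-$x$ control of $\tilde f$ quantitatively: the $C^\gamma$ regularity gets picked up through $\lambda$-integration of the $G$-action on $X$ and of the cocycle $\sigma(h,\cdot)$, which are not uniformly Lipschitz in general and must be controlled using the exponential moment. Once this regularity comparison is in place, the residue theorem follows by plugging $\tilde f$ into Proposition \ref{prop:renerr} with $\gamma$ chosen small enough that all constants behave, yielding the stated error $e^{-\eps t}O\bigl(e^{\eps|\supp f|}(\|f\|_{L^1\mathrm{Lip}}+\|\partial_{uu} f\|_{L^1\mathrm{Lip}})\bigr)$ after possibly shrinking $\eps$.
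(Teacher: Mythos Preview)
Your proposal is correct and follows essentially the same approach as the paper: your auxiliary function $\tilde f$ is exactly the paper's $Qf(x,t)=\int_G f(hx,\sigma(h,x),t)\,d\lambda(h)$, and both arguments reduce $Ef=R\tilde f$ to Proposition~\ref{prop:renerr}, then verify the main term via $\Pi_0$ and bound $\|\tilde f\|_{L^1_\R C^\gamma_X}\lesssim \|f\|_{L^1\mathrm{Lip}}$ using the Lipschitz regularity of the action and cocycle together with the exponential moment. Your writeup is in fact more explicit than the paper's, which invokes \cite[Proposition~4.15]{Li1} for the reduction and simply asserts the norm comparison.
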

\begin{proof}
	By the same proof as in Proposition 4.15 in \cite{Li1}, we only need to compute the $L^1_\R C^\gamma_{ X}$ norm of $Qf$ and $\partial_{tt}Qf$, where 
	\[Qf(x,t)=\int_G f(hx,\sigma(h,x),t)\,d\lambda(h). \]
	Since the measure $\lambda$ has exponential moment, when $\gamma$ is small enough, by using Lipschitz property of the distance and the norm cocycle, there exists $C_\gamma>0$ such that
	\[\|Qf\|_{L^1_\R C^\gamma_{ X}}\leq C_\gamma \|f\|_{L^1\mathrm{Lip}}. \]
	For the main term, we see
	\[\Pi_0 Qf(x,t)=\int_{-\infty}^t\int_X Qf(y,u)\,d\nu_x(y)\,d u=\int_{X}\int_G \int_{-\infty}^{t} f(hy,\sigma(h,y),u)\,d u\,d\lambda(h)\,d\nu_x(y). \]
	The proof is complete.
\end{proof}
Now we can prove Proposition \ref{prop:rescarexp}.
\begin{proof}[Proof of Proposition \ref{prop:rescarexp}]
	We take a smooth cutoff $\phi$ such that $\phi |_{[0,\infty)}=1$, $\supp\phi\subset[-1,\infty)$ and $\phi$ takes value in $[0,1]$. For $\delta>0$, let $\phi_\delta(x)=\phi(x/\delta)$. Let
	\[f_\delta(x,v,u)=\phi_\delta(u)\phi_\delta(-v-u)f(x,v,u). \]
	Then

	\begin{equation}\label{equ:Efdelta}
	|Ef_\delta(x,t)-\Car f(x,t)|\leq |f|_\infty E(\BB_{-\delta\leq u\leq 0}\BB_{0\leq v+u\leq \delta})(x,t)\leq |f|_\infty R(\BB_{-\delta\leq u\leq 0})(x,t),
	\end{equation}
	where $\BB$ is the indicator function
	We take a smooth function $\psi(x,u)=\varphi_\delta(u+\delta)\varphi_\delta(-u)$ to bound the function $\BB_{-\delta\leq u\leq 0}$. Then by Proposition \ref{prop:renerr} and \eqref{equ:Efdelta}, we obtain
	\begin{equation}\label{equ:Efdel1}
	|Ef_\delta(x,t)-\Car f(x,t)|\leq R(\psi)(x,t)\leq C(\delta+e^{-\eps t}(1+\delta^{-1})),
	\end{equation}
	where $C>0$ only depends on $\lambda$ and $\phi$.
	By Lemma \ref{prop:residue}, we see
	\begin{equation}\label{equ:Efdel2}
	\begin{split}
		Ef_\delta(x,t)&=\frac{1}{|\sigma_{\lambda}|}\int_{X}\int_G \int_{-\infty}^{t} f_\delta(hy,\sigma(h,y),u)\,d u\,d\lambda(h)\,d\nu_x(y)\\
		&+e^{-\eps t}O(e^{\eps|\supp f|}(\|f_\delta\|_{L^1\mathrm{Lip}}+\|\partial_{uu}f_\delta\|_{L^1\mathrm{Lip}} )).
	\end{split}
	\end{equation}
	For the major term in \eqref{equ:Efdel2},
	\begin{align}\label{equ:major}
		&\int_{X}\int_G \int_{-\infty}^{t} f_\delta(hy,\sigma(h,y),u)\,d u\,d\lambda(h)\,d\nu_x(y) \nonumber\\
		&=\int_{X}\int_G \int_{-\infty}^{t}\phi_\delta(u)\phi_\delta(-u-\sigma(h,y)) f(hy,\sigma(h,y),u)\,d u\,d\lambda(h)\,d\nu_x(y) \nonumber\\
		&=\int_{X}\int_G\int^{-\sigma(h,y)}_{0} f(hy,\sigma(h,y),u)\,d u\,d\lambda(h)\,d\nu_x(y)+O(\delta\|f\|_\infty).
	\end{align}
	For the error term in \eqref{equ:Efdel2},
	\begin{equation}\label{equ:error}
		\|f_\delta\|_{L^1\mathrm{Lip}}\leq \delta^{-1} \|f\|_{L^1\mathrm{Lip}},\ \|\partial_{uu}f\|_{L^1\mathrm{Lip}}\leq \delta^{-3}\|\partial_{uu}f\|_{L^1\mathrm{Lip}}.
	\end{equation}
	Combine \eqref{equ:Efdel1}-\eqref{equ:error} and take $\delta=e^{-\eps t/4}$. The proof is complete.
\end{proof}

\subsection{Proofs of the renewal theorems Propositions \ref{prop:stop} and \ref{prop:stopexp}}\label{sec:proofrensphere}

Having now proved the renewal theorems Proposition \ref{prop:rescar} and its quantitative form Proposition \ref{prop:rescarexp}, we can now complete the proofs of Proposition \ref{prop:stop} and Proposition \ref{prop:stopexp} from Section \ref{sec:rensphere} earlier, which we needed for the proofs of Theorem \ref{thm:main} and Theorem \ref{thm:mainquant}.

\begin{proof}[Proof of Proposition \ref{prop:stop}]
		Let $\rho$ be a smooth cutoff such that $\rho_{[-|\supp \lambda|,|\supp\lambda|]}=1$ and becomes $0$ outside of $[-|\supp\lambda|-1,|\supp\lambda|+1]$.
		Take $f_1(x,v,u)=f(x,v+u)\rho(v)\rho(u)$. Then $f_1(x,v,u)=f(x,v+u)$ when $v,u$ are in the interval $[-|\supp\lambda|,|\supp\lambda|]$. By definition and the hypothesis that $\|g\|<1$ in the support of $\lambda$, we have
		\[\E_tf(x)=\Car f_1(x,t). \]
		This function $f_1$ satisfies the conditions in Proposition \ref{prop:rescar}, and the proof is complete by using Proposition \ref{prop:rescar}.
\end{proof}

For the quantitative version Proposition \ref{prop:stopexp}, the proof is the same as Proposition \ref{prop:stop}, using Proposition \ref{prop:rescarexp} instead of Proposition \ref{prop:rescar}. 

\begin{proof}[Proof of Proposition \ref{prop:stopexp}]
	We should notice that here we only need Lipschitz norm, but the norm in Proposition \ref{prop:rescarexp} is more complicate. With the same notation as in the proof of Proposition \ref{prop:stop}, we actually have (change of norm)
	\[\|f_1\|_{L^1 \mathrm{Lip}}\leq C\|f\|_{\mathrm{Lip}}, \]
	where $C>0$ only depends on $|\supp\lambda|$ and the $L^1$ Lipschitz norm $\|\cdot \|_{L^1 \mathrm{Lip}}$ is defined in \eqref{eq:l1lip} earlier.
\end{proof}

\bibliographystyle{plain}

\end{document}